\newtheorem{thm}{Theorem}[section]
\newtheorem{lemma}{Lemma}[section]
\newtheorem{prop}{Proposition}[section]
\begin{document}

\title[Boundary of the pyramidal  equisymmetric locus of ${\mathcal M}_n$.]{Boundary of the pyramidal equisymmetric locus of ${\mathcal M}_n$.}
\author{Raquel D\'{\i}az }
\address{Departamento de Geometr\'{\i}a y Topolog\'{\i}a. Facultad de Ciencias Matem\'aticas. Universidad Complutense de Madrid. Espa\~na. Phone n. +34 913944665.}
\email{radiaz@mat.ucm.es}

\author{V\'ictor Gonz\'alez-Aguilera}
\address{Departamento de Matem\'atica. Universidad T\'ecnica Federico Santa Mar\'{\i}a. Valpara\'{\i}so, Chile}
\email{victor.gonzalez@usm.cl}
\thanks{{\it 2010 Mathematics Subject
    Classification}: Primary 32G15; Secondary 14H10.\\
  \mbox{\hspace{11pt}}{\it Key words}: Moduli space, stratification, Dehn-Thurston coordinates.\\
  \mbox{\hspace{11pt}}
The first author was partially supported by the Project MTM2012-31973 and by Project MTM2017-89420-P. The second author was partially supported by Project   PIA, ACT 1415}

\date{\today}

\begin{abstract} The augmented moduli space $\widehat {\mathcal M}_n$ is a compactification of moduli space $\mathcal M_n$ obtained by adding stable hyperbolic surfaces. The different topological types of the added stable surfaces produces a stratification of $\partial \widehat {\mathcal M}_n$.
Let  $\mathcal{P}_n\subset {\mathcal M}_n$ be the {\it pyramidal locus} in moduli space, i.e., the set of hyperbolic surfaces of genus $n$ such that the topological action of its preserving-orientation isometry group is the pyramidal action of the dihedral group $D_n$.  
The purpose of this paper is to state the complete list of   strata in the boundary of $\mathcal{P}_n$.

\end{abstract}

\maketitle

\maketitle
\section{Introduction}

Let $\mathcal{M}_{n}$ be the moduli space of   genus $n\geq2$, i.e., the space of hyperbolic surfaces of genus $n$   up to isometry.
The {\it augmented moduli space} $\widehat {\mathcal M}_n$, introduced by Abikoff (\cite{Abikoff}), is a compactification of  ${\mathcal M}_n$ by adding   stable hyperbolic  surfaces.   The topological type of the stable surfaces added  gives a stratification of $\widehat {\mathcal M}_n$. Since   topological    stable surfaces are codified by their dual stable graphs (see Section \ref{sec:HypSurfWithNodes} for the definitions), this gives a bijection 
between the set of  strata of $\widehat{\mathcal M}_n$ and  the  isomorphism classes of stable graphs   of   genus  $n$. 
 In this stratification, ${\mathcal M}_n$ is also a  stratum, the one  corresponding to the stable graph consisting on a unique vertex with weight $n$ and no edges. 
In the sequel, we will refer to these strata as the {\it topological strata } of $\widehat{\mathcal M}_n$. As notation, given a stable graph ${\mathcal G}$ of   genus $n$, the stratum corresponding to this graph is denoted by $\mathfrak{S}({{\mathcal G}})$.
 
 On the other hand, in \cite{Broughton}, Broughton analyzed   another stratification of moduli space, given by the topological action of the preserving-orientation isometry group of the points of ${\mathcal M}_n$. That is, two hyperbolic surfaces $X,X'$ are in the same {\it equisymmetric stratum (or locus)} if the  
action of ${\rm Iso}^+(X)$ on $X$ is topologically equivalent to the action of  ${\rm Iso}^+(X')$ on $X'$. 
In order to  give the topological class of an action of a group $G$ on a surface $S_n$, it suffices to give an epimorphism $\Phi\colon  \pi_1({\mathcal O})\to G$, where ${\mathcal O}$ is an orbifold and the kernel of $\Phi$ is isomorphic to $\pi_1(S_n)$. Thus, an epimorphism as above determines an equisymmetric locus of ${\mathcal M}_n$, which will be denoted $\mathcal{L}_n(\Phi,{\mathcal O},G)$.

 Let us fix an equisymmetric locus ${\mathcal L}=\mathcal{L}_n(\Phi,{\mathcal O},G)$ of ${\mathcal M}_n$. By $\partial {\mathcal L}$ we mean the boundary of ${\mathcal L}$ in $\widehat {\mathcal M}_n$. 
If  $ \mathfrak{S}({\mathcal G})$ is a topological  stratum of $\partial{\mathcal M}_n=\widehat {\mathcal M}_n\setminus {\mathcal M}_n$  such that $\mathfrak{S}({\mathcal G})\cap \partial {\mathcal L}{\mathbb N}ot=\emptyset$, we refer to this intersection as a {\it (topological) stratum} of $\partial{\mathcal L}$, and we say that the graph ${\mathcal G}$ is {\it realizable} in $ \partial {\mathcal L}$.
We are interested in the problem of determining which stable graphs of genus $n$ are realizable in the boundary of  a given equisymmetric locus ${\mathcal L}$. 
  It is easy to see that each  {\it   admissible} multicurve  $\Sigma$  on ${\mathcal O}$ determines a stratum $\mathfrak{S}_{\Sigma}$ in $\partial {\mathcal L}$ in the following way. By an      admissible multicurve  we mean  a family $\Sigma$ of simple non-homotopic closed curves or arcs joining cone-points of order 2 such that each component of ${\mathcal O}\setminus \Sigma$ has negative Euler characteristic. Then, one can consider a sequence of  hyperbolic orbifolds ${\mathcal O}_k$ homeomorphic to  ${\mathcal O}$ so that the length of the multicurve $\Sigma$ tends  to zero. By taking the covering surfaces, we obtain a sequence of surfaces $S_k$ in 
the equisymmetric locus ${\mathcal L}$, with limit a point in  a stratum that we denote by  $\mathfrak{S}_{\Sigma}$.  Moreover, all strata in $\partial {\mathcal L}$ are obtained in this way. The graph ${\mathcal D}_{\tilde\Sigma}$ associated to the stratum   $\mathfrak{S}_{\Sigma}$ is the   graph dual to the stable surface obtained by collapsing the multicurve  $\tilde\Sigma=p^{-1}(\Sigma)$ of $S$ (where $p\colon S\to {\mathcal O}$ is the branched covering map). 

    The main result of  our  paper \cite{Di-Go} (see Theorem \ref{thm:Combinatorial})
 describes  the graph ${\mathcal D}_{\tilde\Sigma}$ (vertices, edges, incidences, weights) for any given $\Sigma$.   
We next applied 
 this result to the locus $\mathcal{P}_n$ corresponding to the pyramidal action of the dihedral group $D_n$ on a surface of genus $n$. In particular, we found four families of stable graphs whose union contains all the graphs realizable in $\partial{\mathcal P}_n$.
  For three of these families, $ {\mathcal G}^1_{n,m}, \,{\mathcal G}^2_{n,k} ,\, {\mathcal G}^3_{n,m}$, we also proved that  they are actually realizable (see Theorem \ref{thm:pyramids}). 
    In this paper we prove that  the fourth family, ${\mathcal G}^4_{n,m,d}$ is also realizable (Theorem \ref{thm:Grafo4}). In this way, we obtain a complete explicit description of $\partial {\mathcal P}_n$.

    The strategy for the proof of Theorem \ref{thm:Grafo4} consists on  finding the convenient multicurves  $\Sigma$ in ${\mathcal O}$ such that ${\mathcal D}_{\tilde \Sigma}={\mathcal G}^4_{n,m,d}$. We will use Dehn-Thurston coordinates in the search of these multicurves.

 In Section  \ref{sec:Prelim} we recall some definitions and  the part of  Dehn-Thurston coordinates needed in this paper. In Section \ref{sec:MainThmInDiGo} we state  the main theorem in \cite{Di-Go}.
 Theorems \ref{thm:pyramids} and \ref{thm:Grafo4}  in Section \ref{sec:PyaramidalAction}   state the complete description of the strata in the  boundary of the pyramidal locus, the first one was   proved in \cite{Di-Go}. In Section \ref{sec:proof} we prove Theorem \ref{thm:Grafo4}.

 \section{Preliminaries.}\label{sec:Prelim}
 
\subsection{Stable hyperbolic surfaces and stable graphs}\label{sec:HypSurfWithNodes}

 Let $S$ be a  closed orientable  surface  of genus
$n$, and let ${\mathcal F} \subset S$ be  a collection
(maybe empty) of homotopically independent  pairwise disjoint
simple closed curves. Consider the quotient space $S  =S/\mathcal{F}$ obtained by
identifying the points belonging to the same curve in ${\mathcal
F}$. We say that $S$ is a {\it stable surface of genus $n$} and
that each element of $S$ which is the projection of a curve in
${\mathcal F}$ is a {\it node} of $S$. The connected components of the complement of the nodes are called {\it parts}.
A  {\it stable hyperbolic surface} of genus $n$    is  a stable surface $X$ of genus $n$ so that each of its parts  has a complete hyperbolic structure  of finite area (that is, each part is a hyperbolic  surface with punctures).   An {\it isometry}   between two stable hyperbolic surfaces is a homeomorphism   whose  restriction to the complement of the nodes is an isometry. 

 A {\it stable graph} is a connected  weighted graph ${\mathcal G}$ such that   each vertex with weight zero
has  degree at least three (the {\it degree} of a vertex is the number of edges coming into it, taking into account that loops count by two). 
The {\it  genus} of ${\mathcal G}$ is defined to be the genus of the underlined unweighted graph  plus the sum of all the weights, i.e., 
$$
g=\displaystyle\sum_{i=1}^{v_{{\mathcal G}}} g_i+ e_{{\mathcal G}}-v_{{\mathcal G}}+1,
$$
where $v_{{\mathcal G}}$ is the number of vertices and $e_{{\mathcal G}}$ is the number of edges and the $g_i$ is the weight of the vertex $v_i$ of ${\mathcal G}$. An isomorphism between stable graphs is a usual graph isomorphism preserving  the weights. 

To a stable hyperbolic surface   $X$  one can associate  a stable graph ${\mathcal G}(X) $  whose vertices are the parts of $X$,    the edges are the nodes, an edge connects  two vertices if the corresponding node is in the closure of the two corresponding parts, and the weight of a vertex is the genus of the corresponding part.    Notice that the genus of a stable graph ${\mathcal G}(X)$ is equal to the genus of the normalization of $X$ (i.e., the topological surface obtained by ``opening" the nodes of $X$). If $X$ is the stable surface obtained by collapsing the multicurve $\mathcal F$ in a surface $S$, we say that ${\mathcal G}(X)$ is the stable graph dual to $(S,\mathcal F)$.

 There is a bijection between stable hyperbolic surfaces up to homeomorphism and  stable graphs up to isomorphism,  \cite{miranda}. 

\subsection{Dehn-Thurston coordinates of curve systems}\label{sec:Dehn-Thurston}

We will follow the description of Dehn-Thurston coordinates given in \cite{Luo}. These coordinates parametrize the set of multicurves of a surface with boundary, $S$, up to isotopy.  Here, a {\it  multicurve} or {\it curve system} is a proper submanifold on $S$, that is, a collection of disjoint simple closed curves and simple arcs   whose endpoints are in the boundary of $S$. Parallel curves are permitted in a multicurve. In our situation, we will have an orbifold ${\mathcal O}$ instead a surface $S$, and we will consider each cone point of ${\mathcal O}$ as a boundary component. In this way, we can use Dehn-Thurston coordinates to parametrize multicurves of ${\mathcal O}$ which avoid the singular locus, except for the endpoints of their arc components.

Briefly, once fixed a pants decomposition of the surface, the Dehn-Thurston coordinates assign to each multicurve its intersection numbers with each pants curve, and the twisting numbers around each pants curve. For our purposes, the twisting numbers will not be necessary. Since these numbers need quite a few details to be defined, we will not recall their definition here. 

The first case of surface (with negative Euler characteristic) is the  pair of pants or three-holed sphere, $S_{0,3}$. For this case, the Dehn-Thurston coordinates of a multicurve $\alpha$ is the triple $(x_1,x_2,x_3)$ where $x_i$ is the  intersection number of $\alpha$ with the boundary component $\partial_i$. We need to define a {\it standard} representative of a multicurve in $S_{0,3}$, relative to a colored $H$-decomposition of $S_{0,3}$.  A {\it  colored $H$-decomposition } of the oriented surface $S_{0,3}$ is a curve system $b_1\cup b_2\cup b_3$ cutting $S_{0,3}$ into two hexagons which we paint in red and white (the curve system $b_1\cup b_2\cup b_3$ has Dehn-Thurston coordinates $(2,2,2)$). We denote by $b_i$ the arc disjoint from the boundary component $\partial_i$.  A multicurve in $S_{0,3}$ is {\it standard} if each of its components is standard. An arc $\alpha$ in $S_{0,3}$ is {\it standard} if either
  \begin{itemize}
  \item  it is contained in the red hexagon; or
  \item  if $\partial \alpha\subset \partial_i$, then: $\partial \alpha_i$ is contained in the red hexagon; $|\alpha\cap (b_1\cup b_2\cup b_3)|=|\alpha\cap (b_i\cup b_j)|=2$; and the cyclic order of $\alpha\cap \partial_i,\alpha\cap b_i,\alpha\cap b_j$ in the boundary of the red hexagon agrees with the   orientation    induced from  $S_{0,3}$. 
  \end{itemize}
Each curve system in $S_{0,3}$ has a standard representative. For instance, Figure \ref{fig:DH-coordinates} shows examples of  standard curve systems. In the figure, the red hexagon is the one above, and the orientation is given by the anticlockwise orientation in the red hexagon.

\psfrag{d1}{$\delta_1$}
\psfrag{d2}{$\delta_2$}
\psfrag{d3}{$\delta_3$}
\psfrag{b1}{$b_1$}
\psfrag{b2}{$b_2$}
\psfrag{b3}{$b_2$}
 \psfrag{(4,1,1)}{$(4,1,1)$} 
 \psfrag{(1,3,0)}{$(1,3,0)$} 
\psfrag{a}{$\alpha$}   
   \begin{figure}
 \center
 \includegraphics[height=4cm,width=10cm]{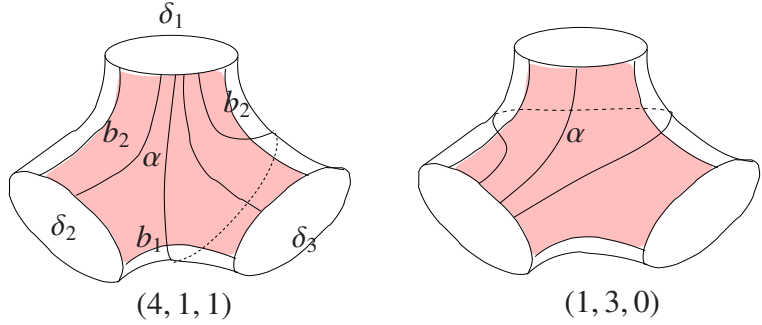}
 \caption{Standard representatives}
\label{fig:DH-coordinates}
 \end{figure}
 
To define the Dehn-Thurston coordinates for an arbitrary surface, we first need to choose a standard representative, which is done in the following way. 

Let $S$ be an oriented surface with boundary. A {\it colored $H$-decomposition} of $S$ is a pair of curve systems $(p,b)$ such that:
\begin{itemize}
\item $p,b$ are chosen in their isotopy class with the property of having minimal intersection.
\item $p$ is a pants decomposition of $S$, that is,  a maximal system of disjoint simple closed curves not homotopic among them and not homotopic to the boundary. It is well known that if $S=S_{g,r}$, i.e., it has genus $g$ and $r$ boundary components, then the number of curves in any pants decomposition is equal to $3g-3+r$. The components of $p$ are denoted $p_i, i=1,\dots, 3g-3+r$. The boundary components  of $S$  will also be called pants curves, and denoted $p_{3g+r-2},\dots, p_{3g+2r-3}$.
\item $b$ is a curve system whose intersection with any pair of pants of $S\setminus p$ is a  multicurve in $S_{0,3}$ with  coordinates $(2,2,2)$.
\item The hexagons of $S\setminus (p\cup b)$ are colored red and white with the condition that there is exactly one red hexagon in each pair of pants of $S\setminus p$   and a pair of  red and white  hexagons   never share a $p$-edge.
\end{itemize}

 An  isotopy class  curve system  $[\alpha]$ of $S=S_{g,r}$ has {\it Dehn-Thurston coordinates}  
 $$((x_1,0) , \dots,  (x_{3g+r-3},0), x_{3g+r-2},\dots x_{3g+2r-3})$$ 
 if there is a representative $\alpha$ such that  the intersection number of $\alpha$ with the pants curve $p_i$ is $x_i$ and the intersection of $\alpha$ with each pair of pants of $S\setminus p$ is a standard curve system of that pair of pants.
 
 The main result about Dehn-Thurston coordinates (restricted to the special case  we are considering, where  all the twists are zero) states that given  non-negative integers $x_1,$ $\dots ,$ $ x_{3g+2r-3}$, there is a multicurve $\alpha$ with Dehn-Thurston coordinates $((x_1,0),\dots, (x_{3g+r-3},0)$, $x_{3g+r-2}$, $\dots,$  $x_{3g+2r-3})$ if and only if whenever $p_i,p_j,p_k$ bound a pair of pants of $S\setminus p$, then $x_i+x_j+x_k$ is even.

\section{Statement of main theorem in \cite{Di-Go}}\label{sec:MainThmInDiGo}

Let $(S,G,\iota)$ be an action of the group $G$ on $S$, let $p\colon S\to {\mathcal O}$ be the associated branched covering and let  $\Phi\colon \pi_1({\mathcal O},*)\to G$ be the associated epimorphism.
Let  $\Sigma=\{{\gamma}_1,\dots, {\gamma}_k\}$ be a multicurve   in ${\mathcal O}$.  Theorem 4.1 in \cite{Di-Go} describes the stable graph ${\mathcal D}_{\tilde\Sigma}$ dual to $(S,\tilde\Sigma=p^{-1}(\Sigma))$, which here  we denote by ${\mathcal D}_{\tilde\Sigma}$.  
To  recall its statement we first need to do some notations and fix some choices.

\subsection*{Notations.}

The  complement ${\mathcal O}\setminus \cup_i {\gamma}_i$ is a union of (open) suborbifolds   ${\mathcal O}_1,\dots, {\mathcal O}_r$. We denote by $\bar {\mathcal O}_j$ the closure of ${\mathcal O}_j$ in ${\mathcal O}$. We do the following definitions
$$
\Sigma_j=\{{\gamma}\in \Sigma  \,:\, {\gamma}\subset\bar {\mathcal O}_j \}\quad \hbox{for each } j=1,\dots, r
$$ 
 $$
 \Sigma^1=\{{\gamma}\in \Sigma \,:\, \hbox{there exists a unique } j\in\{1,\dots,r\} \hbox{ with } {\gamma}\subset \bar{\mathcal O}_j \}
 $$
  $$
 \Sigma^2=\{{\gamma}\in \Sigma \,:\, \hbox{there exist different } j,j'\in\{1,\dots,r\} \hbox{ with } {\gamma}\subset \bar{\mathcal O}_j\cap \bar{\mathcal O}_{j'} \}
 $$

  We have already fixed a basepoint $*$ on ${\mathcal O}$.
Let  $X$ be a path-connected subset of $ {\mathcal O}$. 
Choose a basepoint $*_X\in X\setminus {\rm Sing}\,X$  and a path $\beta_X$ from $*$ to $*_X$ and   consider the homomorphism  $i_*\colon \pi_1(X,*_X)\to \pi_1({\mathcal O},*)$ defined by $i_*(\alpha)=\beta_X\alpha\beta_X^{-1}$. Now, define $\Phi_X=\Phi\circ i_*$. We will use these homomorphisms for the particular cases that $X={\mathcal O}_j$ (in this case $\Phi_{{\mathcal O}_j}$ will be abbreviated to $\Phi_j$) and  $X={\gamma}$, with ${\gamma}\in\Sigma$.

\subsection*{Previous choices}
We first need to choose basepoints and paths in a suitable way. 

Let $\mathcal{R}$ be the graph whose vertices are the suborbifolds ${\mathcal O}_1,\dots, {\mathcal O}_r$, whose edges are the elements of $\Sigma^2$ and two different vertices ${\mathcal O}_{j_1},{\mathcal O}_{j_2}$ are joined with the edge ${\gamma}$ if and only if  ${\gamma}\in \Sigma_{j_1}\cap\Sigma_{j_2}$.

\begin{itemize}

\item[(1)] {\it (Tree of suborbifolds $\mathcal{T}$)} We choose a    spanning tree $\mathcal{T}$ of $\mathcal R$. 
\item[(2)] {\it (Basepoints)} For each $j=1,\dots , r$, take a basepoint $*_j$ in ${\mathcal O}_j\setminus {\rm Sing}\,{\mathcal O}$. We choose one of the $*_j$ as basepoint for ${\mathcal O}$, for instance $*=*_1$. For any  ${\gamma}={\gamma}_i\in \Sigma$ choose  a basepoint  $*_{{\gamma}}\in{\gamma}\setminus{\rm Sing}\,{\mathcal O}$. 
(See Figure \ref{fig:CaminosCGamma}.)
\item[(3)] {\it (Paths $\beta_{j,{\gamma}}$)} Let$j=1,\dots ,r$.   For  each ${\gamma}\in\Sigma^2\cap \Sigma_j$ we  consider a  simple    path 
$\beta_{j,{\gamma}}$ from $*_j$ to $*_{{\gamma}}$.
For 
   each ${\gamma}\in\Sigma^1\cap \Sigma_j$  we consider two   simple paths  $\beta_{j,{\gamma}}^a$, $\beta_{j,{\gamma}}^b$
 from $*_j$ to $*_{{\gamma}}$ such that $\beta_{j,{\gamma}}^a(\beta_{j,{\gamma}}^b)^{-1}$ intersects   ${\gamma}$ exactly once and, in the case that ${\gamma}$ is an arc, $\beta_{j,{\gamma}}^a(\beta_{j,{\gamma}}^b)^{-1}$ bounds a disc which contains just one of the endpoints of ${\gamma}$  and no other cone point. 
   Moreover we choose all these paths   so that  they are disjoint except at their endpoints. 
 
 \item[(4)] {\it (Paths $\beta_{j}$ and $\beta_{{\gamma}}$)}   We will choose the paths  $\beta_j,\beta_{{\gamma}} (j=1,\dots, r, {\gamma}\in \Sigma)$ going along the paths  $\beta_{j,{\gamma}}$, as follows. We choose  $\beta_1$ to be the constant path. For $j=2,\dots, r$, let $T_j={\mathcal O}_1{\gamma}_{i_1}{\mathcal O}_{j_2}{\gamma}_{i_2}\dots {\mathcal O}_j$ be the path in the tree $\mathcal T$ from ${\mathcal O}_1$ to ${\mathcal O}_j$, given by its sequence of vertices and edges. The path $\beta_j$ is determined from $T_j$, replacing each occurrence of ${\mathcal O}_{k}{\gamma} {\mathcal O}_{k'}$ by $\beta_{ k,{\gamma} }\beta_{k',{\gamma} }^{-1}$. 
Finally, let ${\gamma}\in\Sigma$. If ${\gamma}\in \Sigma^1$, then we take $\beta_{{\gamma}}=\beta_j\beta_{j,{\gamma}}^a$. Otherwise, if ${\gamma}\in\Sigma_{j_1}\cap \Sigma_{j_2}$ with $j_1<j_2$, we   take $\beta_{{\gamma}}=\beta_{j_1}\beta_{j_1,{\gamma}}$.
 
\item[(5)] {\it (Paths $c_{{\gamma}}$)} For each ${\gamma}\in \Sigma$, we   denote by $c_{{\gamma}}$ the loop:
$$
c_{{\gamma}}=\left\{ 
\begin{matrix}
\beta_{{\gamma}} (\beta_{j,{\gamma}}^b)^{-1}\beta_j^{-1}=\beta_{j}\, \beta_{j,{\gamma}}^a(\beta_{j,{\gamma}}^b)^{-1}\beta_j^{-1} , & {\rm if} & {\gamma}\in \Sigma^1\cap \Sigma_j; \cr \cr
\beta_{{\gamma}} \, \beta_{j_2,{\gamma}}^{-1}\,\beta_{j_2}^{-1} =\beta_{j_1}\,\beta_{j_1,{\gamma}}\, \beta_{j_2,{\gamma}}^{-1}\,\beta_{j_2}^{-1} , & {\rm if} & {\gamma}\in \Sigma_{j_1}\cap \Sigma_{j_2}, j_1<j_2.
\end{matrix}
\right.
$$ 
  
\end{itemize} 
  
  \psfrag{*}{$*$}

\psfrag{*j1}{$*_{j_1}$}
\psfrag{*j2}{$*_{j_2}$}

\psfrag{*g}{$*_{\gamma}$}
\psfrag{*j}{$*_{j}$}
\psfrag{bj1}{$\beta_{j_1}$}
\psfrag{bj2}{$\beta_{j_2}$}
\psfrag{bg}{$\beta_{\gamma}$}
\psfrag{bj1g}{$\beta_{j_1,\gamma}$}
\psfrag{bj}{$\beta_{j}$}
\psfrag{bjga}{$\beta_{j,\gamma}^a$}
\psfrag{bjgb}{$\beta_{j,\gamma}^b$}

\psfrag{bj2g}{$\beta_{j_2,\gamma}$}

\psfrag{Oj1}{$\bf{\mathcal{O}_{j_1}}$}
\psfrag{Oj2}{$\bf{\mathcal{O}_{j_2}}$}
 \psfrag{D}{$\Delta$}
\psfrag{g}{\textcolor{red}{ $ \gamma$} }
\psfrag{d}{$ {\delta}$}

\psfrag{Oj}{$\bf{\mathcal{O}_{j}}$}
\psfrag{cg}{\textcolor{blue}{$c_{{\gamma}}$}}
  \begin{figure}
\center
\includegraphics[height=5.5cm,width=15.5cm]{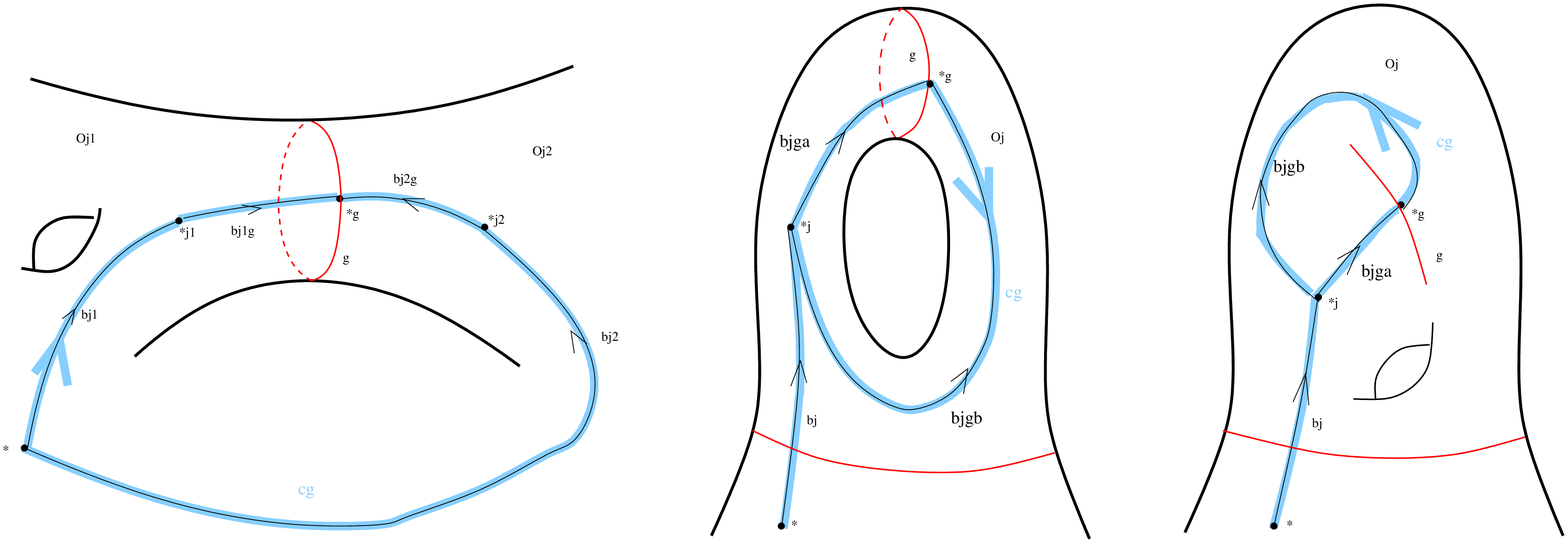}
\caption{Paths $c_{\gamma}$}
\label{fig:CaminosCGamma}
\end{figure}

 \begin{thm}\label{thm:Combinatorial}
Let $(S,G,\iota)$ be an action of the group $G$ on $S$, let $p\colon S\to {\mathcal O}$ be the associated branched covering and let  $\Phi\colon \pi_1({\mathcal O},*)\to G$ be the associated epimorphism.
\\
Let  $\Sigma=\{{\gamma}_1,\dots, {\gamma}_k\}$ be a multicurve   in ${\mathcal O}$ which decomposes ${\mathcal O}$ in the suborbifolds $\,{\mathcal O}_1,\dots, {\mathcal O}_r$,
and let  $\mathcal{D}_{\tilde\Sigma}$ be the stable graph dual to $(S,\tilde \Sigma=p^{-1}(\Sigma))$. 
  We consider the above notations and choices.  
  
   Then: 
 \begin{itemize}

 \item[(i)] (Number of vertices  of ${\mathcal D}_{\tilde\Sigma}$.) 
  The number of vertices of  ${\mathcal D}_{\tilde\Sigma}$ is equal to
 $$
 V =  
  \frac{|G|}{|{\rm Im}\, \Phi_1|} + \dots, +  \frac{|G|}{|{\rm Im}\, \Phi_r|}
 $$
The vertex of ${\mathcal D}_{\tilde\Sigma}$ are  denoted  by $V_{j,g{\rm Im}\, \Phi_j}$ (or simply by $V_{j,g}$), for $j=1,\dots, r$, $g\in G$.
  \item[(ii)]  (Number of edges of ${\mathcal D}_{\tilde\Sigma}$.)
The number of edges  of  ${\mathcal D}_{\tilde\Sigma}$ is equal to
 $$
E =  
  \frac{|G|}{|{\rm Im}\, \Phi_{{\gamma}_1}|} + \dots, +  \frac{|G|}{|{\rm Im}\, \Phi_{{\gamma}_k}|}.
 $$  
 
The edges of ${\mathcal D}_{\tilde\Sigma}$ are denoted by $E_{{\gamma},g{\rm Im}\,\Phi_{{\gamma}}}$ (or simply by $E_{{\gamma},g}$), 
   for ${\gamma}\in\Sigma$, $g\in G$.
 \item[(iii)] (Degrees of    vertices.) The degree of the vertex $V_{j,g}$ 
is equal to 
 $$
 D_{j}= |{\rm Im}\, \Phi_j| \left( \sum_{{\gamma}\in\Sigma_j\cap\Sigma^2}\frac{1}{|{\rm Im}\,\Phi_{{\gamma}}|} +2\sum_{{\gamma}\in\Sigma_j\cap\Sigma^1}\frac{1}{|{\rm Im}\,\Phi_{{\gamma}}|} \right).
  $$
 
 \item[(iv)]  (Weights of  vertices.) The weight $w^j$
 of the vertex $V_{j,g }$ is equal to   
$$\quad \quad
w^j
=1 
 -\frac{1 }{2}\left(|{\rm Im}\, \Phi_j|\, \chi({\mathcal O}_j)+ D_j\right).  
$$ 
 
\item[(v)]  (Edges connecting vertices.) Let $E_{{\gamma},g{\rm Im}\,\Phi_{{\gamma}}}$ be an edge, ${\gamma}\in \Sigma_{j_1}\cap \Sigma_{j_2}$, $j_1\leq j_2$. Then $E_{{\gamma},g }$ joins the vertices $V_{j_1, g}$ and  $V_{j_2, g\Phi(c_{{\gamma}})}$ 
(see Figure \ref{fig:CaminosCGamma}).
 
 \end{itemize}

 \end{thm}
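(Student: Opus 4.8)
The plan is to read off the combinatorial data of $\mathcal{D}_{\tilde\Sigma}$ from the branched covering $p$, piece by piece over the suborbifolds $\mathcal{O}_j$ and over the curves $\gamma\in\Sigma$; away from the singular locus $p$ restricts to a regular covering with deck group $G$, and essentially everything will follow from the monodromy of this covering together with the chosen basepoints and paths. For (i) and (ii): since $\mathcal{O}_j\setminus\mathrm{Sing}\,\mathcal{O}$ is connected, $G$ permutes the connected components of $p^{-1}(\mathcal{O}_j)$ transitively, and the stabilizer of the component through the chosen lift of $*_j$ is exactly $\mathrm{Im}\,\Phi_j$; hence there are $|G|/|\mathrm{Im}\,\Phi_j|$ of them, naturally labelled by the cosets $g\,\mathrm{Im}\,\Phi_j$, and these are the parts of $S/\tilde\Sigma$ lying over $\mathcal{O}_j$, which gives the vertex count and the labels $V_{j,g}$. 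The same orbit--stabilizer argument applied to each $\gamma\in\Sigma$ gives the edge count; when $\gamma$ is a closed curve this is immediate, and when $\gamma$ is an arc joining two order-two cone points one first observes that near each endpoint $p$ is modelled on $z\mapsto z^2$, so a lift of $\gamma$ passes straight through the preimage of the cone point and $p^{-1}(\gamma)$ is a disjoint union of circles, again $|G|/|\mathrm{Im}\,\Phi_\gamma|$ of them for the suitable orbifold meaning of $\Phi_\gamma$; after collapsing, these become the nodes $E_{\gamma,g}$. (Incidentally this shows that every arc of $\Sigma$ lies in $\Sigma^1$, a disc minus an arc joining two interior points being connected.)

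For (iii) and (iv): by the transitivity above, each component $\tilde{\mathcal{O}}_{j,g}$ of $p^{-1}(\mathcal{O}_j)$ is a covering of $\mathcal{O}_j$ of degree $|\mathrm{Im}\,\Phi_j|$, and it is a genuine bordered surface since $S$ is smooth, so $\chi(\tilde{\mathcal{O}}_{j,g})=|\mathrm{Im}\,\Phi_j|\,\chi(\mathcal{O}_j)$ with $\chi(\mathcal{O}_j)$ the orbifold Euler characteristic, and all these components are homeomorphic. To get the degree $D_j$ I would count flags: the number of pairs consisting of a part over $\mathcal{O}_j$ and an adjacent node lying over a fixed $\gamma\in\Sigma_j$, counted with a choice of side of the node, equals $|G|/|\mathrm{Im}\,\Phi_\gamma|$ if $\gamma\in\Sigma^2$ (only one side of $\gamma$ faces $\mathcal{O}_j$) and $2|G|/|\mathrm{Im}\,\Phi_\gamma|$ if $\gamma\in\Sigma^1$ (both sides do, which also covers the arc case); summing over $\gamma\in\Sigma_j$, dividing by the number $|G|/|\mathrm{Im}\,\Phi_j|$ of parts over $\mathcal{O}_j$, and using that $G$ acts transitively on those parts preserving incidences, yields the stated $D_j$. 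Then $w^j$ is the genus of the closed surface obtained by capping with discs the $D_j$ boundary circles of $\tilde{\mathcal{O}}_{j,g}$ that lie over $\Sigma$; this surface has Euler characteristic $|\mathrm{Im}\,\Phi_j|\,\chi(\mathcal{O}_j)+D_j$, and setting this equal to $2-2w^j$ gives (iv).

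The step I expect to be the main obstacle is (v), where the choices of basepoints, the paths $\beta_j,\beta_{j,\gamma}$ and the loops $c_\gamma$ genuinely intervene. I would fix a lift $\tilde{*}_j$ of each $*_j$, transported coherently along the spanning tree $\mathcal{T}$ by the paths $\beta_j$, declare $\tilde{\mathcal{O}}_{j,e}$ to be the component of $p^{-1}(\mathcal{O}_j)$ containing $\tilde{*}_j$ and set $V_{j,g}=g\cdot\tilde{\mathcal{O}}_{j,e}$; then I would follow a lift of $\gamma\in\Sigma_{j_1}\cap\Sigma_{j_2}$ emanating from the boundary of $V_{j_1,g}$ and check, by transporting along the concatenations that define $\beta_{j_1,\gamma}$ and $\beta_{j_2,\gamma}$ (or $\beta^a_{j,\gamma},\beta^b_{j,\gamma}$ when $\gamma\in\Sigma^1$), that the component of $p^{-1}(\mathcal{O}_{j_2})$ met on the far side has coset label multiplied by the holonomy $\Phi(c_\gamma)$, where $c_\gamma$ crosses $\gamma$ once (respectively encircles one endpoint of the arc). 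In the end this is a bookkeeping computation with monodromy; the delicate points are keeping the choices for curves in $\Sigma^1$ coherent---there the two sides of a node can attach to the same vertex, producing a loop of $\mathcal{D}_{\tilde\Sigma}$ exactly when $\Phi(c_\gamma)\in\mathrm{Im}\,\Phi_j$---and, once again, the arc case, in which $c_\gamma$ bounds a disc containing a single order-two cone point.
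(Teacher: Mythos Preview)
The paper does not prove this theorem. As the title of Section~3 makes explicit (``Statement of main theorem in \cite{Di-Go}''), Theorem~\ref{thm:Combinatorial} is the main result of the authors' earlier paper and is only \emph{stated} here for later use; no proof is given in the present paper. So there is nothing to compare your proposal against.

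That said, your outline is the standard and correct argument. Parts (i)--(ii) are exactly the orbit--stabilizer count for components of the preimage of a connected set under a regular (branched) covering, with the arc case handled correctly via the local model $z\mapsto z^2$ at order-two cone points; your observation that every arc lies in $\Sigma^1$ is also right. Parts (iii)--(iv) are the Riemann--Hurwitz/Euler-characteristic computation for the pieces, together with the flag count for degrees; the only point to be slightly careful with is that $\chi(\mathcal{O}_j)$ must be the orbifold Euler characteristic for the multiplicativity $\chi(\tilde{\mathcal{O}}_{j,g})=|{\rm Im}\,\Phi_j|\,\chi(\mathcal{O}_j)$ to hold. For (v), the bookkeeping you describe---fixing lifts of the $*_j$ coherently along the spanning tree $\mathcal{T}$ via lifts of the $\beta_j$, then tracking the coset label across $\gamma$ by lifting the concatenation that defines $c_\gamma$---is precisely how one pins down the incidence, and your identification of the delicate points (loops when $\Phi(c_\gamma)\in{\rm Im}\,\Phi_j$, and the arc case where $c_\gamma$ encircles a single order-two cone point) is accurate. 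To turn the sketch into a proof one only needs to make the ``fix a lift $\tilde *$ and define $V_{j,g}=g\cdot(\text{component through the $\beta_j$-transport of }\tilde *)$'' step precise and then verify the coset computation; nothing conceptual is missing.
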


\section{The pyramidal action of the dihedral group on $S_n$.} 
\label{sec:PyaramidalAction}
 Let $G=D_n$ be the dihedral group of order $2n$,
with presentation 
$$
D_n=\langle \rho, \sigma \;: \; \rho^n=\sigma^2=(\sigma\rho)^2=1 \rangle.
$$ 

In the sequel, the elements of the subgroup  $\langle \rho\rangle$ will be called {\it rotations},  while the remaining elements will be called {\it symmetries}.
We recall that the conjugate of a rotation $\rho^k$ is $\rho^{\pm k}$,   the conjugate of a symmetry is another symmetry, and  if $r$ is any rotation and $s$ any symmetry, then $rs=sr^{-1}$.

Let $S$ be a surface of genus $n\geq 3$, and let ${\mathcal O}$ be the orbifold of signature $(0;2,2,2,2,n)$. We denote by  $P_1, \dots, P_4$  the  cone points of order 2 and by $P_5$ the cone point of order $n$, and let $x_i$ be a loop surrounding $P_i$. Then the fundamental group of this orbifold has presentation 
 $$
 \pi_1({\mathcal O}, *)=\langle x_1,x_2,x_3,x_4,x_5 \;: \;  x_1^2=x_2^2=x_3^2=x_4^2=x_5^n=x_1x_2x_3x_4x_5=1 \rangle.
 $$
The pyramidal action of $D_n$ on $S_n$ is the action determined by the epimorphism
$\Phi\colon \pi_1({\mathcal O})\to D_n$  defined as 
  $$\Phi(x_1)=\sigma, \quad \Phi(x_2)=\Phi(x_3)=\Phi(x_4)=\rho\sigma, \quad \Phi(x_5)=\rho, $$ 
  
 Let $\mathcal{P}_n$ be the equisymmetric locus determined by the pyramidal action, i.e., the set of hyperbolic surfaces of genus $n$ such that their preserving orientation  isometry groups actions  are topologically equivalent to the pyramidal action of the dihedral group $D_{n}$ on $S_n$. We call $\mathcal{P}_n$ the {\it pyramidal equisymmetric locus}.

 Let ${\mathcal G}$ be a stable graph of   genus $n$ and let  $\mathfrak{S}({{\mathcal G}})$ be  the stratum of $\partial {\mathcal M}_n$ determined by ${\mathcal G}$. We recall that ${\mathcal G}$ is   realizable in $\partial\mathcal{P}_n$  if $\mathfrak{S}({{\mathcal G}})\cap \partial\mathcal{P}_n{\mathbb N}ot=\emptyset$.
   Precisely, in \cite{Di-Go} we proved the following (Theorems 5.1,  5.2, 5.3 and 5.4 in \cite{Di-Go}).
 
 \begin{thm}\label{thm:pyramids}
 Let ${\mathcal G}$ be a stable graph. If ${\mathcal G}$ is realizable in $\partial\P_n$, then it is  a graph in the following list.
 \begin{enumerate}
 \item ${\mathcal G}$ has one vertex and $n/m$ edges, where   $m\geq 1$ divides   $n$   (thus, the degree of the vertex is $2n/m$ and its weight is $n-n/m$); we denote this graph by ${\mathcal G}^1_{n,m}$ (see Figure ~\ref{fig:Graphs}).
 \item ${\mathcal G}$ has  two vertices with the same weight $w$ and $E$ edges joining one vertex  to the other, where $2w+E-1=n$, and where  $E=(n,k)+(n,k+1)$ for some $k=1,\dots, n$. We denote this graph by ${\mathcal G}^2_{n,k}$.
 \item  There is a number $m\geq 1$ dividing $n$ so that the graph ${\mathcal G}$ has $\frac{n}{m}+1$ vertices, one with weight 0 and degree $n$ and the others with weight 1 and degree $m$, and $n$ edges, each of them  joining the vertex of weight 0 with a different vertex;  we denote this graph by ${\mathcal G}^3_{n,m}$.
 \item  There exists $m\geq 1$ dividing $n$ and there exists $d\geq 1$ dividing $n/m$  so that the graph ${\mathcal G}$ has:
 \begin{itemize}
 \item[i)] $\frac{n}{m}+1$ vertices $V_0 ,V_1,\dots, V_{n/m}$ all of them with weight 0, $V_0$   with degree $n$ and the others with degree $m+2$;
 \item[ii)] $n+\frac{n}{m}$ edges;
 \item[iii)] for each vertex $V_i, i>0$  there are $m$ edges joining it to $V_0$;
 \item[iv)] the remaining edges join the vertices $V_i, i>0$ in cycles of length $d$.
\end{itemize} 
 We denote this graph by ${\mathcal G}^4_{n,m,d}$.
 \end{enumerate} 
Moreover, if ${\mathcal G}$ is a stable graph of   genus  $n$ and of the type  ${\mathcal G}^1_{n,m}, {\mathcal G}^2_{n,k}, {\mathcal G}^3_{n,m} $ or   is a graph of type ${\mathcal G}^4_{n,m,d}$ with $d=1,2$ or $\frac{n}{m}$, then ${\mathcal G}$ is realizable.

 \end{thm}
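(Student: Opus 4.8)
The plan is to prove both assertions by reducing everything to a classification of the admissible multicurves of the base orbifold ${\mathcal O}$ of signature $(0;2,2,2,2,n)$, and then running Theorem~\ref{thm:Combinatorial} on each one. As recalled in the introduction, every topological stratum of $\partial{\mathcal P}_n$ equals $\mathfrak{S}_\Sigma$ for some admissible multicurve $\Sigma$ in ${\mathcal O}$; so it suffices to (a) enumerate such $\Sigma$ up to the orbifold mapping class group of ${\mathcal O}$, and (b) compute ${\mathcal D}_{\tilde\Sigma}$ for each of them.

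For (a) I would use an orbifold Euler characteristic count. Since $\chi^{\mathrm{orb}}({\mathcal O})=\tfrac1n-1\in(-1,0)$ and every complementary suborbifold is hyperbolic, each complementary piece has $\chi^{\mathrm{orb}}$ in $[\tfrac1n-1,0)$; a disc carrying exactly two order-$2$ cone points is Euclidean, hence no closed curve of $\Sigma$ may bound such a disc, and two disjoint closed curves on a sphere already cut out three complementary pieces, at least one of which is then inadmissible. Using in addition that an arc joining two order-$2$ cone points is $\chi^{\mathrm{orb}}$-neutral and never disconnects a complementary piece, one concludes that ${\mathcal O}\setminus\Sigma$ has at most two components and $\Sigma$ has at most one closed curve; a short case check then shows that, up to homeomorphism, $\Sigma$ is of exactly one of four kinds: (1) a single arc; (2) two disjoint arcs; (3) a single closed curve enclosing one order-$2$ cone point together with $P_5$ (so the other side carries the three remaining order-$2$ cone points, its ``$(2,2,2)$-side''); (4) such a closed curve together with one arc on its $(2,2,2)$-side. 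These four kinds will yield the families ${\mathcal G}^1_{n,m}$, ${\mathcal G}^2_{n,k}$, ${\mathcal G}^3_{n,m}$ and ${\mathcal G}^4_{n,m,d}$ respectively.

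For (b) one applies Theorem~\ref{thm:Combinatorial} kind by kind. In each case one first records the signature of every complementary suborbifold ${\mathcal O}_j$: kind (1) leaves a disc with cone orders $2,2,n$; kind (3) leaves a disc with cone orders $2,n$ and a disc with cone orders $2,2,2$; adding the arc in kind (4) turns the latter into an annulus with one order-$2$ cone point; kind (2) leaves an annulus with one order-$n$ cone point. One then computes $\chi^{\mathrm{orb}}({\mathcal O}_j)$ and identifies the subgroups ${\rm Im}\,\Phi_j$ and ${\rm Im}\,\Phi_\gamma$ of $D_n$. The decisive arithmetic observation is that, because $D_n=\langle\rho,\sigma\rangle$, each of these subgroups is either a cyclic group of rotations or a dihedral subgroup, and in either case the order of its rotation part divides $n$; this is the source of the divisibility constraints $m\mid n$ and (in the fourth case) $d\mid n/m$. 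Substituting the cardinalities $|{\rm Im}\,\Phi_j|$ and $|{\rm Im}\,\Phi_\gamma|$ into parts (i)--(iv) of Theorem~\ref{thm:Combinatorial} produces the number of vertices, the number of edges, the degrees $D_j$ and the weights $w^j$, which one checks coincide with those of ${\mathcal G}^1,\dots,{\mathcal G}^4$; and part (v) gives the incidence pattern --- in particular, in kind (4) it is the monodromy $\Phi(c_\delta)$ of the arc $\delta$ that joins the ``outer'' vertices into cycles, whose common length is the parameter $d$. The point requiring care here is that each of the four topological kinds of $\Sigma$ comes in infinitely many isotopy classes (the orbifold mapping class group contains twists and half-twists), but the dual graph depends only on the finitely many possibilities for the subgroups and monodromy elements above, so the verification is genuinely finite. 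This proves the first assertion.

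For the \emph{Moreover} part one runs the argument backwards: given admissible parameters, one exhibits a concrete multicurve of the appropriate kind and uses Dehn-Thurston coordinates to prescribe its winding so that ${\rm Im}\,\Phi_j$, ${\rm Im}\,\Phi_\gamma$ and $\Phi(c_\gamma)$ take exactly the needed values. For ${\mathcal G}^1_{n,m}$ one takes a single arc wound around $P_5$ so that the dihedral group generated by its two endpoint involutions has order $2m$; for ${\mathcal G}^2_{n,k}$ a pair of disjoint arcs with a twisting parameter realizing $E=(n,k)+(n,k+1)$; for ${\mathcal G}^3_{n,m}$ a closed curve of kind (3) wound so that its $(2,2,2)$-side maps onto a dihedral subgroup of order $2m$; and for ${\mathcal G}^4_{n,m,d}$ with $d\in\{1,2,n/m\}$, such a curve together with a suitably chosen arc on its $(2,2,2)$-side. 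The step I expect to be the main obstacle --- and the reason the remaining values of $d$ are postponed to Theorem~\ref{thm:Grafo4} in Section~\ref{sec:proof} --- is precisely this last one for general $d$: one has to choose the Dehn-Thurston coordinates of the arc $\delta$ so that the permutation it induces on the $n/m$ outer vertices (determined by $\Phi(c_\delta)$ relative to the subgroup ${\rm Im}\,\Phi_b$) decomposes into cycles all of length exactly $d$, and engineering this for an arbitrary divisor $d$ of $n/m$ is a delicate compatibility problem, whereas the cases $d=1,2,n/m$ admit a direct construction.
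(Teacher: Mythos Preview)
This theorem is not proved in the present paper --- it is quoted from \cite{Di-Go} (Theorems~5.1--5.4 there) --- so there is no in-paper proof to compare against directly. That said, your outline is exactly the strategy used in \cite{Di-Go} and partially reproduced here: the admissible multicurves on the $(0;2,2,2,2,n)$-orbifold fall into your four topological kinds, Theorem~\ref{thm:Combinatorial} is run on each, and explicit multicurves are built via Dehn--Thurston coordinates for the realizability claims. Indeed, Proposition~\ref{prop:Grafo4} is precisely your step~(b) for kind~(4), and Lemma~\ref{lem:arc-g1} is the arc construction you sketch for ${\mathcal G}^1_{n,m}$. Your Euler-characteristic classification (a $(2,2)$-disc is parabolic, an arc between order-$2$ cone points is $\chi^{\mathrm{orb}}$-neutral and non-separating, hence at most one closed curve and at most two arcs, and the closed curve must isolate $P_5$ together with a single order-$2$ point) is correct, as is the matching of kinds to ${\mathcal G}^1,\dots,{\mathcal G}^4$.

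The one place your sketch glosses over a genuine constraint is kind~(2): the specific form $E=(n,k)+(n,k+1)$ is not merely ``some twisting parameter'' but is forced by the fact that in the annulus ${\mathcal O}_1$ the two arc-boundaries together with the loop around $P_5$ satisfy a single relation, so if the doubled arcs map under $\Phi$ to $\rho^{j_1},\rho^{j_2}$ then $j_1\pm j_2\equiv\pm1\pmod n$, whence $(n,j_1)+(n,j_2)=(n,k)+(n,k+1)$ for $k=j_1-1$. You would need to make this explicit to get the necessary direction for ${\mathcal G}^2_{n,k}$. Apart from that, your plan is the paper's plan.
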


 \psfrag{n/m}{\small{$\frac{n}{m}$}}
     \psfrag{n-n/m}{\small{$n-\frac{n}{m}$}}       
\psfrag{*g1}{$*_{{\gamma}_1}$}
\psfrag{*g2}{$*_{{\gamma}_2}$}
\psfrag{g1}{${\gamma}_1$}
\psfrag{t}{$t$}
\psfrag{g2}{${\gamma}_2$}
\psfrag{1=rho}{ }
\psfrag{2=rho2}{ } 
\psfrag{1=1}{$t=1$ }
\psfrag{2=rho}{ }
\psfrag{t=0}{$t=0$}
\psfrag{1,rho}{ }
\psfrag{w}{\small{$w$}}
\psfrag{n,k}{\small{$(n,k)+(n,k+1)$}}
\psfrag{vd}{$\vdots$}

\psfrag{1}{\small{1}}
\psfrag{0}{\small{0}}

 \begin{figure} 
\includegraphics[height=3.0cm,width=15cm]{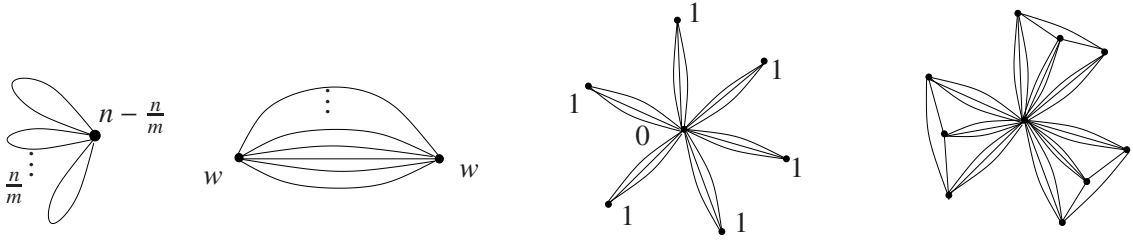}
\caption{Graphs of type ${\mathcal G}^1_{n,m}, \,{\mathcal G}^2_{n,k}, \,{\mathcal G}^3_{n,m} $ and ${\mathcal G}^4_{n,m,d}$. No number assigned to vertices means weight zero. }
\label{fig:Graphs}
\end{figure}
 
 In this paper we prove the following.
 
  \begin{thm}
  \label{thm:Grafo4}
 Let $n,m,d$ be positive integers so that $m$ divides $n$ and $d$ divides  $\frac{n}{m}$. Then the graph   ${\mathcal G}^4_{n,m,d}$ is realizable in $\partial\P_n$.
 \end{thm}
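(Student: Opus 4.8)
The plan is to exhibit, for each admissible triple $(n,m,d)$, an explicit admissible multicurve $\Sigma=\Sigma_{n,m,d}$ on the orbifold ${\mathcal O}$ of signature $(0;2,2,2,2,n)$ and to read off, via Theorem~\ref{thm:Combinatorial}, that the dual stable graph ${\mathcal D}_{\tilde\Sigma}$ is isomorphic to ${\mathcal G}^4_{n,m,d}$. By the last assertion of Theorem~\ref{thm:pyramids} we may already assume $d\neq 1,2,\tfrac nm$, so the whole content is the construction for the intermediate values of $d$.

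First I would fix a pants decomposition of ${\mathcal O}$, regarded as the five--holed sphere $S_{0,5}$ with the cone points playing the role of boundary components, together with a colored $H$--decomposition, so that multicurves are described by Dehn--Thurston coordinates with all twisting numbers zero, as in Section~\ref{sec:Dehn-Thurston}. I would then write down the coordinates of the candidate $\Sigma_{n,m,d}$, check via the realizability criterion (the three pants--curve coordinates around every pair of pants add up to an even number) that they define an embedded multicurve, and verify that every component of ${\mathcal O}\setminus\Sigma$ has negative orbifold Euler characteristic, i.e. that $\Sigma$ is admissible. The multicurve should be organised so that ${\mathcal O}\setminus\Sigma$ contains one suborbifold ${\mathcal O}_0$ with ${\rm Im}\,\Phi_0=D_n$ — which Theorem~\ref{thm:Combinatorial}(i),(iii),(iv) will turn into the single vertex $V_0$ of weight $0$ and degree $n$, once the boundary curves of ${\mathcal O}_0$ are chosen to have image of order $2$ — while the remaining suborbifolds, together with the curves of $\Sigma$ lying in their closures, account for the vertices $V_1,\dots,V_{n/m}$ and for the two kinds of edges: the $m$--fold bundles joining $V_0$ to each $V_i$, and the $\tfrac nm$ edges among the $V_i$'s. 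The incidence data of the latter is governed by the elements $\Phi(c_\gamma)\in D_n$ attached in item~(5) of the preliminary choices to the curves $\gamma$ producing those edges: these elements must act on the relevant coset spaces with orbits of size exactly $d$, so that item~(v) of Theorem~\ref{thm:Combinatorial} returns $\tfrac{n}{md}$ cycles of length $d$.

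Granting such a $\Sigma$, the rest is a bookkeeping application of Theorem~\ref{thm:Combinatorial}: part~(i) gives $\tfrac nm+1$ vertices, part~(ii) gives $n+\tfrac nm$ edges, part~(iii) gives the degrees $n$ and $m+2$, part~(iv) gives vanishing weights, and part~(v) — via the loops $c_\gamma$ and the chosen spanning tree $\mathcal{T}$ of the suborbifold graph $\mathcal{R}$ — produces the precise incidence pattern of ${\mathcal G}^4_{n,m,d}$. I expect the main obstacle to be the construction itself, and specifically the arrangement producing the $d$--periodic incidence pattern of part~(v): unlike the cases $d=1,2$, where the relevant element of $D_n$ has order at most $2$, and $d=\tfrac nm$, where one gets a single cycle, an intermediate period is not visible on any single curve or arc of ${\mathcal O}$ and has to be built into the global way in which $\Sigma$ disconnects ${\mathcal O}$ and into the structure of $\mathcal{R}$. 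A secondary difficulty is keeping all the subgroup images ${\rm Im}\,\Phi_j$ and ${\rm Im}\,\Phi_\gamma$, all the weights and all the degrees under simultaneous control while meeting the admissibility and realizability constraints on the Dehn--Thurston coordinates.
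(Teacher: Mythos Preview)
Your proposal is a plan, not a proof: the entire content of Theorem~\ref{thm:Grafo4} is the explicit construction of the multicurve, and you never produce it. More importantly, your diagnosis of the main obstacle is off. You expect the intermediate cycle length $d$ to be ``built into the global way in which $\Sigma$ disconnects ${\mathcal O}$ and into the structure of ${\mathcal R}$'', anticipating a multicurve with many components and a complicated suborbifold graph. The paper shows instead that a \emph{two-component} multicurve $\Sigma=\{\gamma_1,\gamma_2\}$ always suffices, with $\gamma_1$ an arc joining two order-$2$ cone points and $\gamma_2$ a simple closed curve bounding a disc containing the order-$n$ cone point and one further order-$2$ cone point. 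The complement ${\mathcal O}\setminus\Sigma$ has only two pieces, so ${\mathcal R}$ is a single edge; all of $V_1,\dots,V_{n/m}$ arise from the \emph{one} suborbifold ${\mathcal O}_1$ (an annulus with one cone point of order $2$), because $|{\rm Im}\,\Phi_1|=2m$.

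The parameter $d$ is governed not by the combinatorics of ${\mathcal R}$ but by the \emph{isotopy class of $\gamma_2$ relative to $\gamma_1$}. By Theorem~\ref{thm:Combinatorial}(v) the edges over the arc $\gamma_1\in\Sigma^1$ join $V_{1,g}$ to $V_{1,gR}$, where $R=Ss$ with $S=\Phi(c_{\gamma_1})$ and $s=\Phi(z)$ the image of the loop around the cone point inside ${\mathcal O}_1$; the cycle length is then the order of the coset $R\cdot C_m$ in $C_n/C_m$. The paper first fixes $\gamma_1$ (via Dehn--Thurston coordinates on ${\mathcal O}$) so that the rotation part of ${\rm Im}\,\Phi_{\gamma_1}$ has order exactly $m$. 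It then cuts along $\gamma_1$, works with Dehn--Thurston coordinates on the resulting four-holed sphere ${\mathcal O}'$, and lets a single intersection coordinate $x$ of $\gamma_2$ vary: a direct computation gives $R=\rho^{-(k+\epsilon)(x-1)+t}$ with $k=n/m$, $\epsilon=\pm1$, and some fixed $t$. Since $\gcd(k,k+\epsilon)=1$, the congruence $-(k+\epsilon)(x-1)+t\equiv k/d\pmod{k}$ is solvable for $x$, and any positive solution yields $d(\gamma_1,\gamma_2)=d$. So the ``intermediate period'' \emph{is} visible on a single closed curve --- it is encoded in how $\gamma_2$ winds inside ${\mathcal O}\setminus\gamma_1$ --- and the missing idea in your outline is precisely this two-step construction together with the congruence argument.
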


The idea for the proof is to use Dehn-Thurston coordinates, which parametrize all multicurves in ${\mathcal O}$.   Systematically applying Theorem \ref{thm:Combinatorial} to these multicurves, we   obtain the complete stratification of $\partial\mathcal{P}_n$. If we want to know whether or not a stable graph ${\mathcal G}$ is realizable in $\partial\mathcal{P}_n$, either we find  a multicurve $\Sigma$ in ${\mathcal O}$ so that ${\mathcal G}_{\Sigma}={\mathcal G}$ or we prove there is no such curve. The difficulty, of course, is that there are infinitely many isotopy classes of multicurves.

 \section{Proof of Theorem \ref{thm:Grafo4}} \label{sec:proof}

Given a graph ${\mathcal G}^4_{n,m,d}$, we need to find a multicurve $\Sigma $ such that ${\mathcal G}_{\Sigma}= {\mathcal G}^4_{n,m,d}$. 
The multicurve $\Sigma$ will consist on one arc ${\gamma}_1$ and a closed curve ${\gamma}_2$. We start by recalling from \cite{Di-Go} that, in this case, ${\mathcal D}_{\tilde\Sigma}={\mathcal G}^4_{n,m,d}$, for some $m,d$ depending on $\Sigma$ that will be defined next.

\subsection{Multicurve: one arc and  one closed curve.}
Let us  first fix the basepoint $*$ and generators $x_1$ , $\dots,$ $ x_5$ of the fundamental group of the orbifold ${\mathcal O}$ of  signature $(0;2,2,2,2,n)$.
  
Let  $\Sigma=\{{\gamma}_1,{\gamma}_2\}$  be an (admissible) multicurve in ${\mathcal O}$, where ${\gamma}_1$ is an arc and ${\gamma}_2$ is a closed curve. Then ${\mathcal O}-\Sigma={\mathcal O}_1\cup{\mathcal O}_2$, where ${\mathcal O}_1$ is an annulus with a cone point of order 2, and ${\mathcal O}_2$ is a disc with a cone point of order 2 and a cone point of order $n$. We can assume that $*\in{\mathcal O}_1$.
Consider basepoints $*_1=*, *_2,*_{{\gamma}_1},*_{{\gamma}_2}$ and paths $\beta_{1,{\gamma}_1}^a,\beta_{1,{\gamma}_1}^b,\beta_{1,{\gamma}_2}, \beta_{2,{\gamma}_2}, \beta_1,\beta_2,\beta_{{\gamma}_1},\beta_{{\gamma}_2}$ as explained in Section \ref{sec:MainThmInDiGo} (see Figure \ref{fig:Fundamental}). Notice that $c_{{\gamma}_1}=\beta_{1,{\gamma}_1}^a (\beta_{1,{\gamma}_1}^b)^{-1}$ and $c_{{\gamma}_2}$ is trivial.

 \psfrag{z}{\tiny{$z$}} 
 \psfrag{O1}{\tiny{${\mathcal O}_1$}}
 \psfrag{O2}{\tiny{${\mathcal O}_2$}}
  \psfrag{g1}{\tiny{${\gamma}_1$}}
   \psfrag{g2}{\tiny{${\gamma}_2$}}
  \psfrag{b1a}{\tiny{$\beta^a_{1,{\gamma}_1}$}}
  \psfrag{b1b}{\tiny{$\beta^b_{1,{\gamma}_1}$}}
   \psfrag{b1g2}{\tiny{$\beta_{1,{\gamma}_2}$}}
    \psfrag{b2g2}{\tiny{$\beta_{2,{\gamma}_2}$}}
\psfrag{s1}{\tiny{$*_1$}}
\psfrag{s2}{\tiny{$*_2$}}
\psfrag{sg1}{\tiny{$*_{{\gamma}_1}$}}
\psfrag{sg2}{\tiny{$*_{{\gamma}_2}$}}

\begin{figure}
\includegraphics[height=5cm,width=5cm]{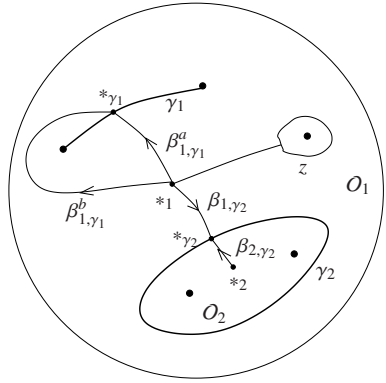}
\caption{Notations for paths. }
\label{fig:Fundamental}
\end{figure}

Notice that the (orbifold) fundamental  group $\pi_1({\gamma}_1,*_{{\gamma}_1})$ is generated by the loops ${\gamma}_1^a, {\gamma}_1^b$, each one of them going from $*_{{\gamma}_1}$ until one endpoint of ${\gamma}_1$ and coming back. For instance, we can assume that ${\gamma}_1^a$ is homotopic to $(\beta^b_{{\gamma}_1})^{-1}\beta^a_{{\gamma}_1}$. Notice that $\Phi(c_{{\gamma}_1})=\Phi_{{\gamma}_1}({\gamma}_1^a)$.

  In this situation  we have: 
\begin{enumerate}

\item[(a)]  $\pi_1({\mathcal O}_2 ,*_2)$ is  generated by a loop around the cone point of order $n$ and a loop around another cone point. Thus,  ${\rm Im}\,\Phi_2$ is generated by a conjugate of a $\rho$ (which is $\rho^{\pm 1}$) and by a conjugate of a symmetry, which is again a symmetry. Thus, 
 ${\rm Im}\,\Phi_2$ is  the whole group, and so  $|{\rm Im}\,\Phi_2|=2n$.
\item[(b)]     ${\rm Im}\,\Phi_{{\gamma}_2}$ is generated by a product of a rotation and a symmetry, which is a symmetry, so that $|{\rm Im}\,\Phi_{{\gamma}_2}|=2$.
\item[(c)]  \label{def:m}  ${\rm Im}\,\Phi_{{\gamma}_1}=\langle \Phi_{{\gamma}_1}({\gamma}_1^a), \Phi_{{\gamma}_1}({\gamma}_1^b) \rangle$ is generated by two symmetries of $D_n$, so it is isomorphic to $D_m$ for some $m=m({\gamma}_1)\geq 1$ dividing $n$. Thus, $|{\rm Im}\,\Phi_{{\gamma}_1}|=2m$. We call $S=\Phi_{{\gamma}_1}({\gamma}_1^a)$. We remark that $m$ is the order of the rotation $\Phi_{{\gamma}_1}({\gamma}_1^a{\gamma}_1^b)$.
\item[(d)]     Finally, $\pi_1({\mathcal O}_1,*_1)$ is generated by a curve ${\gamma}'_1 $  that  surrounds the arc ${\gamma}_1$,
 and by  a loop $z$ surrounding the cone point of ${\mathcal O}_1$. Notice that $\Phi_1({\gamma}_1')=\Phi_{{\gamma}_1}({\gamma}_1^a{\gamma}_1^b)$ and that  $\Phi_1(z)=\Phi(z)$ is a symmetry, which we denote by $s$. Thus, ${\rm Im}\,\Phi_{ 1}$ is also a subgroup isomorphic to $D_m$.  
We remark  that ${\rm Im}\,\Phi_{{\gamma}_1}$ and ${\rm Im}\,\Phi_1$ are not necessarily equal, but share the subgroup $C_m$  of index 2 containing all their  rotations.  
 \item[(e)]  \label{def:d} We define $R=Ss$, and let $d=d({\gamma}_1,{\gamma}_2)$ be the minimum number such that $R^d\in {\rm Im}\,\Phi_1$. If we call $C_n$ to the subgroup of rotations of $D_n$ and $C_m$ to the subgroup of rotations of ${\rm Im}\,\Phi_1$, we have that $d$ is the order of the coset $R\cdot C_m$ in $C_n/C_m$. In particular, $d$ divides $n/m$.
\end{enumerate}

 \noindent

Using the above information, we will next apply  Theorem \ref{thm:Combinatorial}  to determine the graph ${\mathcal D}_{\tilde \Sigma}$.

\begin{prop}\label{prop:Grafo4} Let $\Sigma=\{{\gamma}_1,{\gamma}_2\}$ with ${\gamma}_1$ an arc and ${\gamma}_2$ a closed curve. Let $m=m({\gamma}_1)$ as defined in (c) above, and let $d=d({\gamma}_1,{\gamma}_2)$ as defined in (e) above. Then ${\mathcal D}_{\tilde\Sigma}={\mathcal G}^4_{n,m,d}$.
\end{prop}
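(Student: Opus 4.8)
The plan is to apply Theorem~\ref{thm:Combinatorial} to $\Sigma=\{{\gamma}_1,{\gamma}_2\}$, feeding in the data recorded in (a)--(e) above, and then to read off from part (v) the incidences that identify the resulting stable graph with ${\mathcal G}^4_{n,m,d}$. No Dehn--Thurston coordinates are needed for this proposition; it is pure bookkeeping from Theorem~\ref{thm:Combinatorial}.

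First I would substitute the numerical data into parts (i)--(iv). Here $r=2$, with $\Sigma^1=\{{\gamma}_1\}$, $\Sigma^2=\{{\gamma}_2\}$, $\Sigma_1=\{{\gamma}_1,{\gamma}_2\}$, $\Sigma_2=\{{\gamma}_2\}$; the orders are $|{\rm Im}\,\Phi_1|=2m=|{\rm Im}\,\Phi_{{\gamma}_1}|$, $|{\rm Im}\,\Phi_2|=2n$, $|{\rm Im}\,\Phi_{{\gamma}_2}|=2$; and the orbifold Euler characteristics are $\chi({\mathcal O}_1)=-\tfrac12$ (annulus with one cone point of order $2$) and $\chi({\mathcal O}_2)=-\tfrac12+\tfrac1n$ (disc with cone points of orders $2$ and $n$). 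Theorem~\ref{thm:Combinatorial} then yields: $\tfrac{2n}{2m}+\tfrac{2n}{2n}=\tfrac nm+1$ vertices, namely $\tfrac nm$ vertices $V_{1,\ast}$ coming from ${\mathcal O}_1$ and one vertex coming from ${\mathcal O}_2$ (as ${\rm Im}\,\Phi_2=G$), which we call $V_0$; a total of $\tfrac{2n}{2m}+\tfrac{2n}{2}=n+\tfrac nm$ edges; degree $D_1=2m\bigl(\tfrac12+2\cdot\tfrac1{2m}\bigr)=m+2$ for every $V_{1,\ast}$ and $D_2=2n\cdot\tfrac12=n$ for $V_0$; and weight $1-\tfrac12\bigl(|{\rm Im}\,\Phi_j|\chi({\mathcal O}_j)+D_j\bigr)=0$ for all vertices (in both cases the bracket equals $2$). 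This already matches (i)--(ii) of the definition of ${\mathcal G}^4_{n,m,d}$, with $V_0$ playing the role of the distinguished vertex.

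Next I would analyse the edges using Theorem~\ref{thm:Combinatorial}(v). Since ${\gamma}_2\in\Sigma_1\cap\Sigma_2$ with $c_{{\gamma}_2}$ trivial and ${\rm Im}\,\Phi_2=G$, each of the $n$ edges $E_{{\gamma}_2,\ast}$ joins some $V_{1,\ast}$ to $V_0$; as ${\gamma}_2$ can be pushed off into ${\mathcal O}_1$ we have ${\rm Im}\,\Phi_{{\gamma}_2}\subseteq{\rm Im}\,\Phi_1$, so $g\,{\rm Im}\,\Phi_{{\gamma}_2}\mapsto g\,{\rm Im}\,\Phi_1$ is onto with fibres of size $|{\rm Im}\,\Phi_1|/|{\rm Im}\,\Phi_{{\gamma}_2}|=m$, i.e.\ exactly $m$ of these edges are attached at each $V_{1,\ast}$; this is (iii). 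For the remaining $\tfrac nm$ edges, ${\gamma}_1\in\Sigma^1\cap\Sigma_1$, so $E_{{\gamma}_1,g}$ joins $V_{1,g\,{\rm Im}\,\Phi_1}$ and $V_{1,g\Phi(c_{{\gamma}_1})\,{\rm Im}\,\Phi_1}=V_{1,gS\,{\rm Im}\,\Phi_1}$. Writing $s=\Phi_1(z)\in{\rm Im}\,\Phi_1$, so that $S=Rs$ with $R=Ss$ the rotation of (e), and identifying $G/{\rm Im}\,\Phi_1$ with $C_n/C_m\cong{\mathbb Z}/(n/m)$ (legitimate since $C_n\cap{\rm Im}\,\Phi_1=C_m$ has index $n/m$ in $C_n$), one obtains $gS\,{\rm Im}\,\Phi_1=gR\,{\rm Im}\,\Phi_1$, so the edges $E_{{\gamma}_1,\ast}$ realise the cyclic permutation $\rho^{i}\,{\rm Im}\,\Phi_1\mapsto\rho^{i+\delta}\,{\rm Im}\,\Phi_1$ of the vertices $V_{1,\ast}$, where $\delta$ is determined by $\rho^{\delta}C_m=RC_m$. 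Hence these $\tfrac nm$ edges split the $\tfrac nm$ vertices $V_{1,\ast}$ into cycles, each of length equal to the order of $RC_m$ in $C_n/C_m$, which is exactly $d$ by the definition in (e) (a ``cycle of length $1$'' being a loop and one ``of length $2$'' a double edge); this is (iv), and it completes the identification ${\mathcal D}_{\tilde\Sigma}={\mathcal G}^4_{n,m,d}$.

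I expect the only delicate point to be part (v). One must check that the endpoint assignments are independent of the chosen coset representatives; for the ${\gamma}_1$-edges this uses essentially that ${\rm Im}\,\Phi_{{\gamma}_1}$ and ${\rm Im}\,\Phi_1$, although possibly distinct, share the rotation subgroup $C_m$. And one must see precisely that right multiplication by $S=\Phi(c_{{\gamma}_1})$ shifts a coset of ${\rm Im}\,\Phi_1$ by the rotation $R$ (the symmetry $s$ being absorbed into ${\rm Im}\,\Phi_1$), so that the cycles formed by the ${\gamma}_1$-edges have length equal to the order of $RC_m$ in $C_n/C_m$, namely $d$. Everything else reduces to substituting the data of (a)--(e) into the formulas of Theorem~\ref{thm:Combinatorial}.
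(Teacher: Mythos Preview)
Your proof is correct and follows essentially the same approach as the paper: both apply Theorem~\ref{thm:Combinatorial} with the data (a)--(e), compute the number of vertices, edges, degrees and weights by direct substitution, and then analyse the ${\gamma}_1$-edges via the key observation that $gS\,{\rm Im}\,\Phi_1=gR\,{\rm Im}\,\Phi_1$ (since $s\in{\rm Im}\,\Phi_1$), so that iterating yields cycles of length equal to the order of $R$ modulo ${\rm Im}\,\Phi_1$, namely $d$. Your version is in fact slightly more explicit than the paper's sketch in two places---the verification that exactly $m$ of the ${\gamma}_2$-edges meet each $V_{1,\ast}$ (via ${\rm Im}\,\Phi_{{\gamma}_2}\subset{\rm Im}\,\Phi_1$) and the identification of the vertex set with $C_n/C_m$---but the underlying argument is the same.
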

 
This proposition corresponds to part (a) of Theorem 5.4 in \cite{Di-Go}. We sketch here its proof and refer to \cite{Di-Go}   for details.

\begin{proof} We apply Theorem \ref{thm:Combinatorial} to our situation. Part (i) of this theorem and (a), (d) above imply that the number of vertices of ${\mathcal D}_{\tilde \Sigma}$ is equal to 
$1+\frac{n}{m}$. Notice that all the  vertices $V_{2,g{\rm Im}\,\Phi_{2}}$ are equal, and we will denote $V_0$ this vertex; while there are $\frac{n}{m}$ different vertices of the form $V_{1,g{\rm Im}\,\Phi_{1}}$, $g\in G$.

Theorem \ref{thm:Combinatorial}(ii) and (b), (c) above imply that the number of edges of ${\mathcal D}_{\tilde \Sigma}$ is equal to $n+\frac{n}{m}$. There are $n$ different edges of the form $E_{{\gamma}_2,g{\rm Im}\,\Phi_{{\gamma}_2}}$ and there are $\frac{n}{m}$ different edges of the form $E_{{\gamma}_1,g{\rm Im}\,\Phi_{{\gamma}_1}}$.

The degrees and weights of the vertices are readily computed from parts (iii) and (iv) of Theorem \ref{thm:Combinatorial}.

Finally, using Theorem \ref{thm:Combinatorial}(v), we study the connectivity among  vertices and vertices. Since ${\gamma}_2$ is in   $\bar {\mathcal O}_1\cap \bar{\mathcal O}_2$, and since $c_{{\gamma}_2}$ is trivial, we have that  $E_{{\gamma}_2,g }$  connects   $V_0=V_{1,g }$ with   $V_{2,g }$. Thus, each vertex $V_{2,g{\rm Im}\,\Phi_2 }$ is connected to $V_0$ with  $\frac{n}{m}$ edges.

On the other hand,   ${\gamma}_1$ is only contained in $\bar {\mathcal O}_1$, and recall that  $\Phi(c_{{\gamma}_1})=\Phi_{{\gamma}_1}({\gamma}_1^a)=S$. Then, for any $g\in G$, the edge  $E_{{\gamma}_1,g{\rm Im}\,\Phi_{{\gamma}_1}}$ joins $V_{1,g{\rm Im}\,\Phi_1}$ to $V_{1,gS{\rm Im}\,\Phi_1}$.
Recall from (d) above the notation $s=\Phi(z)$, where $z$ is a path in ${\mathcal O}_1$. Thus, $s\in {\rm Im}\,\Phi_1$ and $V_{1,gS{\rm Im}\,\Phi_1}=V_{1,gSs{\rm Im}\,\Phi_1}=V_{1,gR{\rm Im}\,\Phi_1}$, where $R=Ss$. In the same way,   
 the edge $E_{{\gamma}_1,gR{\rm Im}\,\Phi_{{\gamma}_1}}$ joins $V_{1,gR{\rm Im}\,\Phi_1}$ to $V_{1,gR^2{\rm Im}\,\Phi_1}$, and so on. We will obtain a cycle of $k $ edges when the vertex $V_{1,gR^k{\rm Im}\,\Phi_1}$ be equal to the first one $V_{1,g{\rm Im}\,\Phi_1}$, i.e., when $R^k\in {\rm Im}\,\Phi_1$. The smallest $k$ satisfying this is the $d=d({\gamma}_1,{\gamma}_2)$ defined in (e) above. Thus, the $\frac{n}{m}$ edges of the form $E_{{\gamma}_1,g}$ split the vertices $V_{1,g}$ in $\frac{n/m}{d}$ cycles of length $d$.
\end{proof}
 
 \subsection{\bf Proof of Theorem \ref{thm:Grafo4}}
 Let $n,m,d$ positive integers with $n\geq 3$,  $m|n$ and $d|\frac{n}{m}$. By Lemma \ref{lem:arc-g1} below, there is an arc ${\gamma}_1$ in ${\mathcal O}$ joining cone points of order 2 such that $m({\gamma}_1)=m$.
 Next, by Lemma \ref{lem:curve-g2} below, there is an admissible closed curve in ${\mathcal O}-{\gamma}_1$ (i.e., bounds a  disc containing the cone point of order $n$ and one cone point of order 2) such that $d({\gamma}_1,{\gamma}_2)=d$. Then, by Proposition \ref{prop:Grafo4}, we have that ${\mathcal D}_{\tilde \Sigma}={\mathcal G}^4_{n,m,d}$, so that this graph is realizable in $\partial {\mathcal P}_n$. Thus, the proof of  Theorem \ref{thm:Grafo4} is reduced to the proof of the two lemmas below.

\begin{lemma}\label{lem:arc-g1}
Let $n\geq 3$, let $m$  be an integer number  dividing $n$ and consider $k=\frac{n}{m}$. Then there exists an arc ${\gamma}_1$ in ${\mathcal O}$ joining cone points of order 2 such that  $\Phi( \beta_{{\gamma}_1}  {\gamma}_1^a  {\gamma}_1^b \beta_{{\gamma}_1}^{-1})=\rho^k$, thus, a rotation of order $m$. Thus $m({\gamma}_1)=m$ (recall the definition of $m({\gamma}_1)$ in (c) above).
\end{lemma}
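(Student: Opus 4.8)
The plan is to build the arc $\gamma_1$ explicitly by describing it relative to the cone points $P_1,\dots,P_4$ of order $2$ and the fixed generators $x_1,\dots,x_5$, and then to read off the value of $\Phi$ on the relevant loop directly from this description. First I would recall that, by the choices made in Section \ref{sec:MainThmInDiGo}, the loop $\beta_{\gamma_1}\gamma_1^a\gamma_1^b\beta_{\gamma_1}^{-1}$ based at $*$ is freely homotopic to a small loop encircling the two endpoints of $\gamma_1$; if $\gamma_1$ joins $P_i$ to $P_j$ (two cone points of order $2$), then this loop is conjugate in $\pi_1({\mathcal O},*)$ to a product $x_i x_j$ up to the ordering and the path $\beta_{\gamma_1}$ chosen to reach it. Hence $\Phi(\beta_{\gamma_1}\gamma_1^a\gamma_1^b\beta_{\gamma_1}^{-1})$ is conjugate to $\Phi(x_i)\Phi(x_j)$, a product of two symmetries of $D_n$, therefore a rotation; and the order of that rotation is exactly $m(\gamma_1)$ by definition (c).

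Next I would exploit the freedom in how the arc $\gamma_1$ winds through the orbifold before connecting its two endpoints. The point is that, although $\Phi(x_2)=\Phi(x_3)=\Phi(x_4)=\rho\sigma$ and $\Phi(x_1)=\sigma$ so that the "naive" products $\Phi(x_i x_j)$ only give rotations of order $1$ or $n$, one is free to conjugate: an arc from $P_i$ to $P_j$ that loops $t$ times around the cone point $P_5$ of order $n$ (more precisely, whose associated loop is homotopic to $x_5^{-t}(x_i x_j)x_5^{t}$, or to $x_5^{t}$ composed with a fixed small bigon) realizes the element $\rho^{-t}\cdot(\text{rotation})\cdot\rho^{t}$ times a rotation coming from the $x_5$-windings; since the subgroup of rotations is abelian, careful bookkeeping shows that by choosing the winding number $t$ appropriately one can make the resulting rotation equal to $\rho^{2t}$ or $\rho^{2t+1}$, and in particular, choosing $t$ so that $2t\equiv k \pmod n$ (or $2t+1\equiv k$, according to the parity of $k$ and $n$), one obtains $\rho^k$. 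So the concrete construction is: take the arc joining, say, $P_1$ and $P_2$, and let it make the appropriate number of half-turns around $P_5$; I would draw this arc in a figure analogous to Figure \ref{fig:Fundamental} and compute $\Phi$ on the resulting loop using the relations $\rho^n=\sigma^2=(\sigma\rho)^2=1$ and $s\rho = \rho^{-1}s$.

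The main obstacle I anticipate is the bookkeeping of base paths and orientations: the element I actually need to evaluate is not literally $x_ix_j$ but $\beta_{\gamma_1}\gamma_1^a\gamma_1^b\beta_{\gamma_1}^{-1}$, and getting the conjugating element and the exponent of $x_5$ exactly right — rather than off by a sign or by one — requires carefully fixing the arc, its base point $*_{\gamma_1}$, the paths $\beta_{1,\gamma_1}^a,\beta_{1,\gamma_1}^b$, and checking that $\gamma_1$ is admissible (each complementary suborbifold has negative Euler characteristic, which holds here since ${\mathcal O}_1$ is an annulus with one order-$2$ cone point and ${\mathcal O}_2$ a disc with an order-$2$ and an order-$n$ cone point). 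Once the arc and paths are pinned down in a picture, the verification that $\Phi(\beta_{\gamma_1}\gamma_1^a\gamma_1^b\beta_{\gamma_1}^{-1})=\rho^k$ is a short computation in $D_n$, and the conclusion $m(\gamma_1)=m$ is then immediate because $\rho^k$ has order $n/\gcd(n,k)=n/k=m$.
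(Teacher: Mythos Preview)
Your overall plan is sound and would lead to a proof, but the key formula you write down is wrong and would not survive the computation you propose. You say that an arc winding $t$ times around $P_5$ has surrounding loop homotopic to $x_5^{-t}(x_ix_j)x_5^{t}$; applying $\Phi$ to that gives $\rho^{-t}\,\Phi(x_ix_j)\,\rho^{t}$, and since $\Phi(x_ix_j)$ is already a rotation and the rotation subgroup is abelian, this equals $\Phi(x_ix_j)$ no matter what $t$ is. Conjugating the \emph{product} of the two symmetries by a rotation changes nothing. What winding actually does is conjugate \emph{one} of the two half-arc loops: the correct model is $\beta_{\gamma_1}\gamma_1^a\gamma_1^b\beta_{\gamma_1}^{-1}\simeq x_i\cdot (x_5^{-t}x_jx_5^{t})$ (up to the usual base-path issues), whose $\Phi$-image is $s_i\cdot \rho^{-t}s_j\rho^{t}=s_i\,\rho^{-2t}s_j=\rho^{2t}s_is_j$. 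Since $s_is_j\in\{1,\rho,\rho^{-1}\}$ depending on whether $1\in\{i,j\}$, you then get $\rho^{2t}$ or $\rho^{2t\pm1}$, and now the parity argument you sketch goes through. Your text hints that you sense something extra is happening (``times a rotation coming from the $x_5$-windings''), but as written the mechanism is misidentified; without the corrected formula the ``careful bookkeeping'' would just return the naive value and you would be stuck.

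It is also worth noting that your route differs from the paper's. The paper does not wind around $P_5$ at all: it fixes a concrete $H$-decomposition with pants curves $q_1$ (enclosing $P_2,P_3$) and $q_2$ (enclosing $P_4,P_5$), and takes $\gamma_1$ to be the arc with Dehn--Thurston coordinates $((x,0),(0,0),0,1,1,0,0)$ for $x$ even and $((x,0),(0,0),1,1,0,0,0)$ for $x$ odd. The winding therefore happens across $q_1$ and involves the order-$2$ cone points, and the conjugating words are products of symmetries rather than powers of $\rho$; the computation then shows $\Phi(\beta_{\gamma_1}\gamma_1^a\gamma_1^b\beta_{\gamma_1}^{-1})=\rho^{x}$ for every integer $x$. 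Your approach via $P_5$ is arguably more direct conceptually, but the paper's choice dovetails with the Dehn--Thurston machinery used in the next lemma and makes the pictures uniform. Either route works once the surrounding-loop formula is written correctly.
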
 

This lemma is Lemma 5.1 of \cite{Di-Go}.  For the sake of completeness, we prove it here, providing the Dehn-Thurston coordinates of the arc ${\gamma}_1$.

\psfrag{p1}{\tiny{$p_1$}}
\psfrag{p2}{\tiny{$p_2$}}
\psfrag{p3}{\tiny{$p_3$}}
\psfrag{p4}{\tiny{$p_4$}}
\psfrag{p5}{\tiny{$p_5$}}
\psfrag{q1}{\tiny{$q_1$}}
\psfrag{q2}{\tiny{$q_2$}}
\psfrag{s}{\tiny{$*$}}
\psfrag{(left)}{{((8,0),(0,0),0,1,1,0,0)}}
\psfrag{(right)}{{((7,0),(0,0),1,1,0,0,0)}}

 \begin{figure} 
\includegraphics[height=6cm,width=13cm]{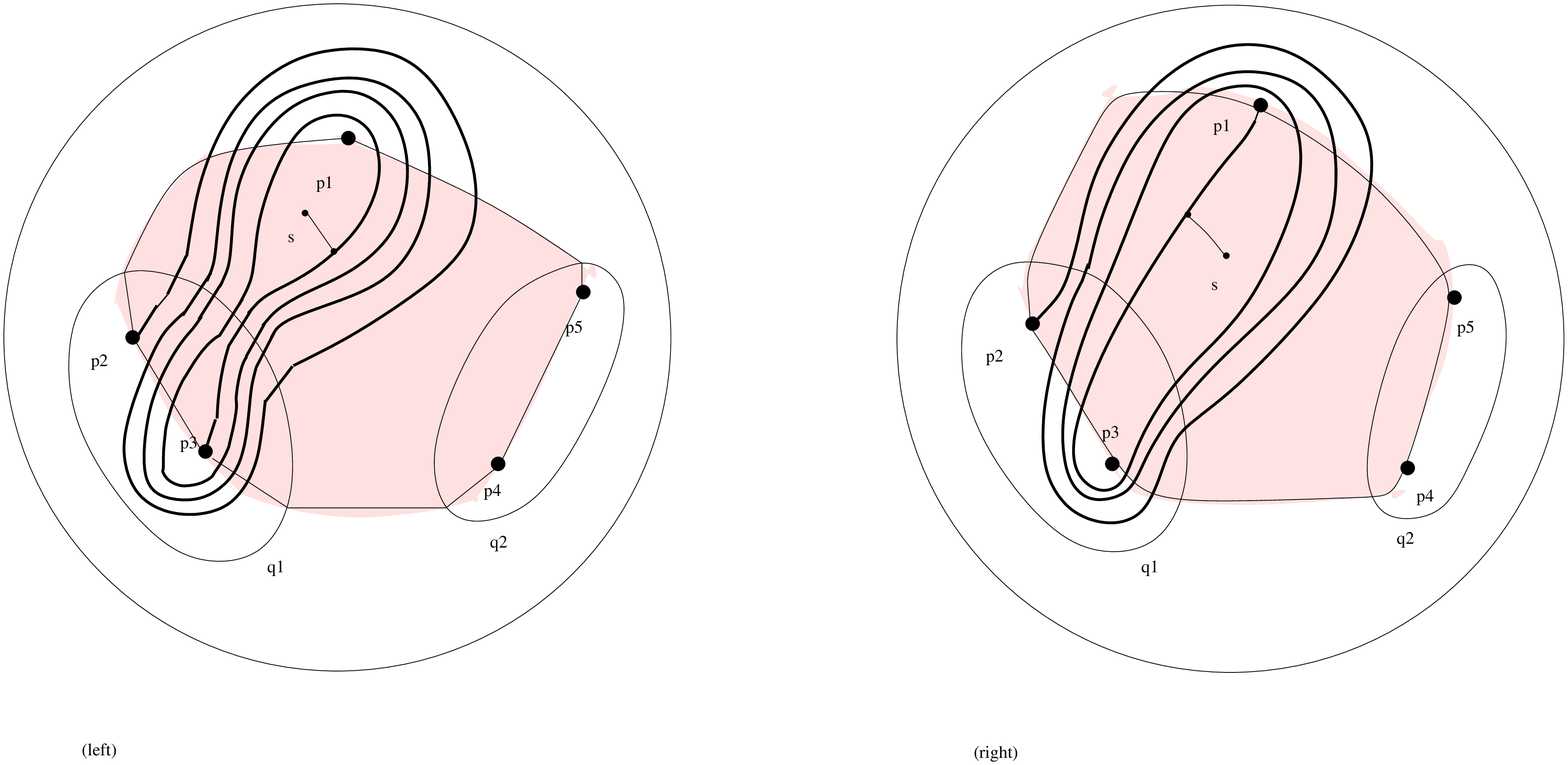}
\caption{Arc ${\gamma}_1$}\label{fig:DT-lemma1}
\end{figure}

\begin{proof}
 Actually, we will prove that for any   integer $x$ there is an arc ${\gamma}_1$ satisfying  \break
  $\Phi( \beta_{{\gamma}_1}  {\gamma}_1^a  {\gamma}_1^b \beta_{{\gamma}_1}^{-1}) = \rho^x$.
 Let us  consider an $H$-decomposition on ${\mathcal O}$, where the cone points are considered as boundary components, that is, ${\mathcal O}$ is a sphere with five boundary components $p_1,\dots, p_5$, where $p_1,\dots p_4$ correspond to the cone points of order 2 and  $p_5$ corresponds to the cone point of order $n$. We consider  two   pants curves $q_1,q_2$ such that $q_1$ bounds a disc containing the cone points $p_2,p_3$ and $q_2$ bounds a disc containing the cone points $p_4,p_5$.
  Consider arcs  $b_i$ cutting the pants into hexagons as in the Figure \ref{fig:DT-lemma1} (the hexagons are the shaded regions).  
  
  Now we consider the curve system with Dehn-Thurston coordinates  
  $$
 ( (x,0),(0,0),0, 1,1,0,0), \quad  x=2\ell
  $$ with respect to the $H$-decomposition given by $(\{q_1,q_2,p_1,\dots,p_5\},\{b_i\})$  (see the left hand side part of Figure \ref{fig:DT-lemma1} for an example with $x=8$).
  We can easily check that this curve system consists of  one arc ${\gamma}_1$. Setting the basepoints as in the figure, and choosing ${\gamma}_1^a$ the half-arc going to $p_3$,   we have that 
  $$
  \Phi(\beta_{{\gamma}_1}{\gamma}_1^a\beta_{{\gamma}_1}^{-1})= \rho\sigma, 
  $$
$$  
   \Phi(\beta_{{\gamma}_1}{\gamma}_1^b\beta_{{\gamma}_1}^{-1})= ((\sigma\rho\sigma)^{\ell-1}\sigma) \rho\sigma ((\sigma\rho\sigma)^{\ell-1}\sigma)^{-1} =\rho^{-\ell+1}\sigma\rho\sigma\sigma\rho^{\ell-1}= \rho^{-2\ell+1}\sigma  =  \rho^{-x+1}\sigma .
  $$
  Thus, we have that 
  $
    \Phi(\beta_{{\gamma}_1}{\gamma}_1^a{\gamma}_1^b\beta_{{\gamma}_1}^{-1})= \rho^{x}
  $.

On the other hand, if we consider the Dehn-Thurston coordinates  
$$
 ( (x,0),(0,0),1,1,0,0,0), \quad  x=2\ell+1
  $$ 
  with respect to the  same $H$-decomposition  (see the right hand side part of Figure \ref{fig:DT-lemma1} for an example with $x=7$), then 
 again the  curve system consists of  one arc ${\gamma}_1$. Setting the basepoints as in the figure, and choosing ${\gamma}_1^b$ the half-arc going to $p_1$,   we have that 
  $$
  \Phi(\beta_{{\gamma}_1}{\gamma}_1^b\beta_{{\gamma}_1}^{-1})= \sigma, 
  $$
$$  
   \Phi(\beta_{{\gamma}_1}{\gamma}_1^a\beta_{{\gamma}_1}^{-1})= (\rho\sigma\sigma)^{\ell} \rho\sigma (\rho\sigma\sigma)^{-\ell}
    =\rho^{2\ell+1}\sigma  =  \rho^{x}\sigma .
  $$
  Thus, 
  $
    \Phi(\beta_{{\gamma}_1}{\gamma}_1^a{\gamma}_1^b\beta_{{\gamma}_1}^{-1})= \rho^{x}
  $.
\end{proof}

\begin{lemma}\label{lem:curve-g2} Let $n,m,d$ 
positive integers with $n\geq 3$, $m$ dividing $n$, $d$ dividing $\frac{n}{m}$, and let ${\gamma}_1$ be an arc joining cone points of order 2 with $m({\gamma}_1)=m$. Then there exists an admissible closed curve ${\gamma}_2$ in ${\mathcal O}-{\gamma}_1$  such that $d({\gamma}_1,{\gamma}_2)=d$ (recall the definition of $d({\gamma}_1,{\gamma}_2)$ from (e) above).
\end{lemma}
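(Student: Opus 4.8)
The plan is to produce $\gamma_2$ explicitly by prescribing its Dehn--Thurston coordinates with respect to a convenient $H$--decomposition of $\mathcal{O}$ adapted to $\gamma_1$, just as $\gamma_1$ itself was produced in Lemma \ref{lem:arc-g1}, and then to extract $d(\gamma_1,\gamma_2)$ from the recipe (c)--(e) above. The key observation is that, once $m(\gamma_1)=m$, the rotation subgroup $C_m$ of $\mathrm{Im}\,\Phi_1$ is forced to be the unique subgroup $\langle\rho^{n/m}\rangle$ of order $m$ of $C_n=\langle\rho\rangle$; hence, writing the rotation $R=Ss$ as $\rho^{e}$, the number $d(\gamma_1,\gamma_2)$ is the order of $\rho^{e}C_m$ in the cyclic group $C_n/C_m\cong\mathbb{Z}/(n/m)$, that is,
$$
d(\gamma_1,\gamma_2)=\frac{n/m}{\gcd(e,\,n/m)}.
$$
So the lemma will follow once we exhibit, for each divisor $d$ of $n/m$, an admissible configuration whose associated exponent $e$ satisfies $\gcd(e,n/m)=\tfrac{n/m}{d}$.

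Concretely I would first fix the cut of $\mathcal{O}$ along $\gamma_1$ into the annulus $\mathcal{O}_1$ (with a cone point of order $2$) and the disc $\mathcal{O}_2$ (with cone points of orders $2$ and $n$), choose the basepoints $*_1,*_2,*_{\gamma_1},*_{\gamma_2}$ and the connecting paths as in Section \ref{sec:MainThmInDiGo}, and record the symmetry $S=\Phi_{\gamma_1}(\gamma_1^a)$. Then I would write $\gamma_2$ as the simple closed curve cutting off a disc $\mathcal{O}_2$ that contains $P_5$ together with a chosen cone point of order $2$, give its Dehn--Thurston coordinates, and check that $\{\gamma_1,\gamma_2\}$ is admissible. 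The heart of the argument is the computation of $s=\Phi(z)$, where $z$ is the loop of $\mathcal{O}_1$ around its order-$2$ cone point: expressing $z$ in $\pi_1(\mathcal{O},*)$ as an explicit conjugate $w\,x_i^{\pm1}w^{-1}$ read off from the picture of the multicurve and simplifying the word in $\rho$ and $\sigma$, one obtains $s$ as a concrete symmetry $\rho^{c}\sigma$; multiplying, $R=Ss=\rho^{e}$ with $e$ an explicit function of the Dehn--Thurston parameter(s) of the configuration.

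The remaining step is arithmetic in $C_n/C_m$: as the relevant parameter ranges over the values compatible with $m(\gamma_1)=m$ (so that its greatest common divisor with $n$ is pinned), and on allowing the two admissible topological placements of $\gamma_2$ (which change which order-$2$ cone point lies in $\mathcal{O}_1$, hence change $s$, and in particular the parity of $e$), one checks that $\gcd(e,n/m)$ attains every value $\tfrac{n/m}{d}$ with $d\mid n/m$. The step I expect to be the main obstacle is precisely this surjectivity, together with the word computation that supports it: the basepoint and path conventions make the exponent $e$ sensitive to an additive constant and to a sign, and the admissibility restriction on the parameter limits the residues of $e$ modulo $n/m$ reachable by any single configuration, so the delicate point is to choose the configuration(s) — and, where needed, the parity of an auxiliary coordinate — so that all divisors $d$ of $n/m$ genuinely occur, the very smallest cases (e.g.\ $n=3$) being, if necessary, dispatched by hand.
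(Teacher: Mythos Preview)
Your plan has the right architecture and the correct arithmetic translation: with $C_m=\langle\rho^{n/m}\rangle$ and $R=\rho^{e}$, one has $d(\gamma_1,\gamma_2)=\dfrac{n/m}{\gcd(e,\,n/m)}$, so the task is to realise $e\equiv d'\pmod{k}$ for $k=n/m$ and $d'=k/d$. But the step you yourself flag as the obstacle is a genuine gap: you never show that the residue of $e$ modulo $k$ can be prescribed. Your proposed mechanism --- varying a Dehn--Thurston parameter together with the ``two admissible topological placements of $\gamma_2$'' and parity --- is not substantiated, and there is no reason to expect a finite number of placements plus a parity flip to hit every residue class modulo an arbitrary $k$.

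The paper closes this gap with a single observation that you are missing. Working in $\mathcal{O}'=\mathcal{O}$ cut along $\gamma_1$, it takes a one-parameter family of arcs $\gamma_2'(x)$ (and $\gamma_2$ the closed curve surrounding $\gamma_2'(x)$), with $x$ the intersection number with a fixed pants curve~$q$. The word computation you describe then gives, uniformly in $x\ge 0$,
\[
\Phi(z)=\rho^{(k+\epsilon)(x-1)}\sigma_1,\qquad R=\rho^{-(k+\epsilon)(x-1)+t},
\]
where $\epsilon=\pm1$ comes from $\Phi(z_3)=\rho^{\pm1}$ and $t$ is a fixed constant with $S\sigma_1=\rho^{t}$. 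The decisive point is that $\gcd(k,k+\epsilon)=1$ for $k>1$, so $x\mapsto -(k+\epsilon)(x-1)+t$ is a bijection on residues modulo $k$; hence the congruence $-(k+\epsilon)(x-1)+t\equiv d'\pmod k$ always has a solution $x_0$, and the corresponding $\gamma_2$ satisfies $d(\gamma_1,\gamma_2)=d$. No alternative placement of $\gamma_2$ is needed; the case $k=1$ (so $m=n$, $d=1$) is trivial since then $\mathrm{Im}\,\Phi_1=G$. Your outline would become a proof once you carry out the explicit computation of $e$ and record this coprimality.
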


\begin{proof}
The curve ${\gamma}_2$   should be a simple closed curve disjoint from ${\gamma}_1$, and  that bounds a disc containing the cone point of order $n$ and another cone point. To find this curve, we will look for a simple  arc ${\gamma}_2'$ in ${\mathcal O}\setminus {\gamma}_1$ joining the cone point of order $n$ and another cone point.   The curve ${\gamma}_2$ will be then a simple closed curve surrounding ${\gamma}_2'$. 

 Let us put  $k=\frac{n}{m} $ and  let ${\gamma}_1$ be an arc in ${\mathcal O}$  given in lemma \ref{lem:arc-g1}
Let ${\mathcal O}'$ be the orbifold obtained by  cutting ${\mathcal O} $ along ${\gamma}_1$.
 We  consider an $H$-decomposition on ${\mathcal O}'$, where the cone points of ${\mathcal O}'$ are considered as boundary components; that is, ${\mathcal O}'$ is a sphere with four boundary components $p_1',\dots, p_4'$, where $p_1',p_2'$ correspond to the cone points of order 2, $p_3'$ corresponds to the cone point of order $n$ and $p_4'$ corresponds to ${\gamma}_1$. We consider one more pants curve $q$
   separating ${\gamma}_1$ and $p_2'$ from $p_1',p_3'$. Consider arcs  $b_1,\dots, b_4$ joining cyclically ${\gamma}_1$ and the three cone points. In this way we obtain an $H$-decomposition in four hexagons (see  Figure \ref{fig:g2}, where the red hexagons are the two of them depicted in the upper part of ${\mathcal O}'$). 
   
Now, for any integer $x\geq 0$ we consider the multicurve ${\gamma}'_2(x)$, defined by its 
 Dehn-Thurston coordinates with respect 
to the pants curves $q,p_1',\dots, p_4'$ and the $H$-decomposition described above. If $x$ is odd, $x=2\ell+1$, then the  Dehn-Thurston coordinates of ${\gamma}'_2(x)$  are $ ((x,0),0,1,1,0 )$; if $x$  is even, $x=2\ell$, then the Dehn-Thurston coordinates of ${\gamma}'_2(x)$  are $((x,0),1,0,1,0 )$ (see Figure \ref{fig:g2}).
 We can easily check that ${\gamma}_2'(x)$ is a  single arc for any $x$; for instance, the upper right part of Figure \ref{fig:g2} shows ${\gamma}_2'(7)$ and the lower right part of this figure shows ${\gamma}_2'(6)$.  We will prove that there exists a value $x_0$  of $x$  so that ${\gamma}'_2= {\gamma}_2'(x_0)$  is the desired arc, i.e., $d({\gamma}_1,{\gamma}_2)=d$.

In order to do that, we need to compute the order of the rotation $R=\Phi( \beta_{{\gamma}_1} {\gamma}_1^a\beta_{{\gamma}_1}^{-1} z )$, where we recall that $z$ is a loop in the annulus ${\mathcal O}-({\gamma}_1\cap{\gamma}_2)$ surrounding the cone point in it. In Figure \ref{fig:g2} we have chosen such a loop $z$ for any $x\geq 0$. Of course, the paths $z$ and $R$ will  depend on $x$, but, to simplify notation, we  will denote them by $z=z(x)$ and $R=R(x)$.

We can assume that the basepoint $*$ is as in Figure \ref{fig:g2}. Let ${\gamma}'_1$ be  a loop in $\pi_1({\mathcal O}',*)$ surrounding ${\gamma}_1$ and 
oriented in such a way that  $\Phi_1({\gamma}'_1)=\rho^k$ (so ${\gamma}'_1$ is 
 homotopic to $\beta_{{\gamma}_1}  {\gamma}_1^a  {\gamma}_1^b \beta_{{\gamma}_1}^{-1}$).
We choose loops $z_1,z_2,z_3$ in $\pi_1({\mathcal O}',*)$ surrounding $p_1',p_2',p_3'$ and so that $z_1{\gamma}'_1z_2$ is homotopic to $z_3$.
 Since $z_i, i=1,2$ surrounds a cone point  of order 2, we have that $\Phi(z_i)$ is conjugate to a symmetry, and hence it is a symmetry, that we call $\sigma_i$. On the other hand, $\Phi(z_3)$ is conjugate to $\rho^{\pm 1}$, so $\Phi(z_3)=\rho^{\epsilon }$, with $\epsilon=\pm 1$. 
Since $z_1{\gamma}'_1z_2$ and  $z_3$ are homotopic, then we have that  $\sigma_1\rho^k\sigma_2=\rho^{\epsilon}$, and this implies that $\sigma_1\sigma_2=\rho^{k+\epsilon}$.

\psfrag{p1}{\tiny{$p_1$}}
\psfrag{p2}{\tiny{$p_2$}}
\psfrag{p3}{\tiny{$p_3$}}
\psfrag{g1}{\tiny{${\gamma}_1=p_4$}}
\psfrag{g2}{\tiny{${\gamma}_2'$}}
\psfrag{d}{\tiny{${\gamma}_2'(x)$}}
\psfrag{d''}{\tiny{${\gamma}_2'(x)$}}
\psfrag{q}{\tiny{$q$}}
\psfrag{s}{\tiny{$*$}}
\psfrag{z1}{\tiny{$z_1$}}
\psfrag{z2}{\tiny{$z_2$}}
\psfrag{z3}{\tiny{$z_3$}}
\psfrag{g'1}{\tiny{${\gamma}'_1$}}
\psfrag{O'}{\tiny{$\mathcal{O}'$}}
\psfrag{z}{\tiny{\textcolor{blue}{$z$}}}

\psfrag{x=par}{  $x =2\ell, \, \ell\geq 1$  }
\psfrag{x=impar}{  $x =2\ell+1, \, \ell\geq 0$  }
\psfrag{x=0}{  $x =0$  }

 \begin{figure}
\includegraphics[height=13cm,width=13cm]{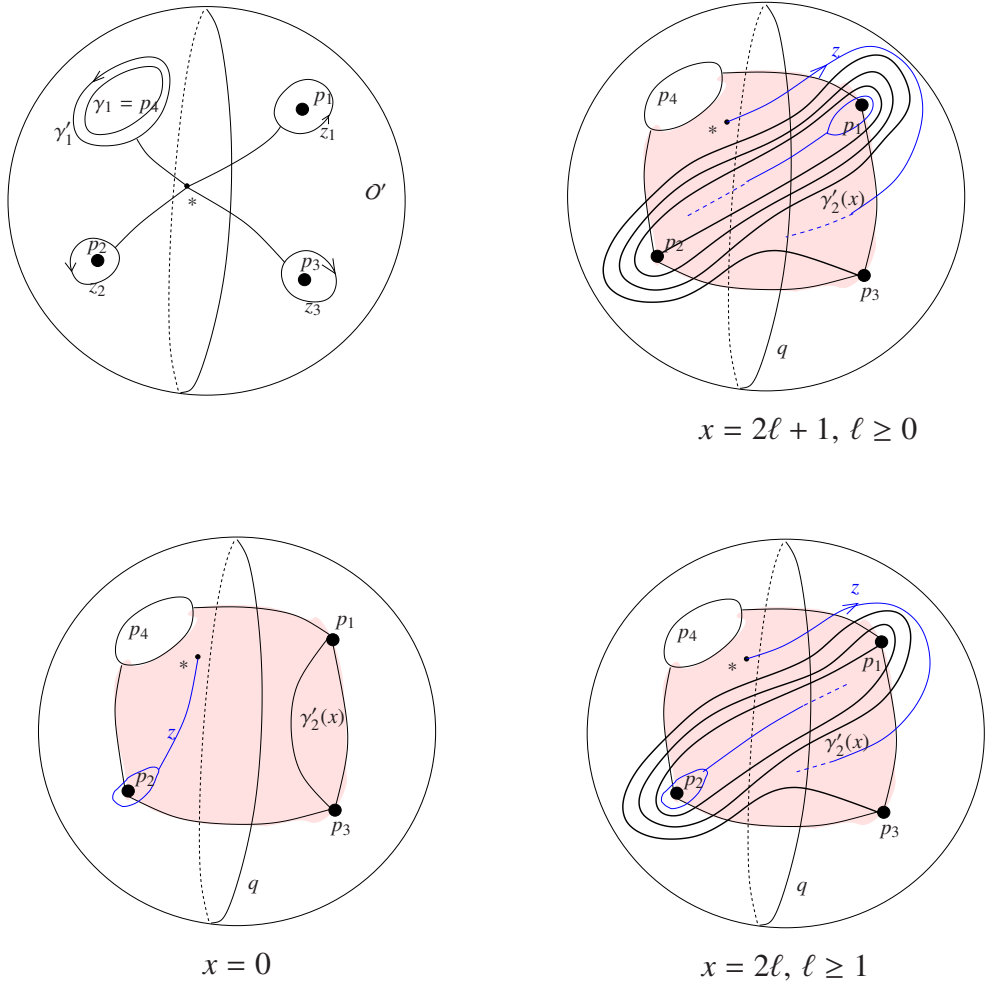}
\caption{\small{Finding the curve ${\gamma}_2$: it will be a closed curve surrounding the arc ${\gamma}_2'(x)$,  for some $x$.}} \label{fig:g2}
\end{figure} 
 
Reading $z$ from    Figure \ref{fig:g2} and taking into account that $\sigma_1,\sigma_2$ have order 2, we have (for $x\geq 0$):
\begin{itemize}
\item[i)] For $x = 2\ell+1,\,\ell \geq 0$, we have
\begin{eqnarray*}
\Phi(z)&=&
 (\sigma_1\sigma_2)^{\ell}\sigma_1     (\sigma_1\sigma_2)^{-\ell }
=(\sigma_1\sigma_2)^{2\ell }\sigma_1 =
  \rho^{ (k+\epsilon)2\ell}  \sigma_1
 = \rho^{ (k+\epsilon)(x-1)}\sigma_1.
  \end{eqnarray*}
\item[ii)] For $x=0$, $\Phi(z)=\sigma_2=\sigma_1\rho^{(k+\epsilon)}=\rho^{-(k+\epsilon)}\sigma_1$.
\item[iii)] For $x = 2\ell,\,x>0$,  we have 
\begin{eqnarray*}
\Phi(z)&=&
\left((\sigma_1\sigma_2)^{\ell-1}\sigma_1 \right) \sigma_2 \left((\sigma_1\sigma_2)^{\ell-1}\sigma_1 \right)^{-1}
=(\sigma_1\sigma_2)^{\ell-1}\sigma_1\sigma_2\sigma_1(\sigma_1\sigma_2)^{-\ell+1}
\\
&=&(\sigma_1\sigma_2)^{2\ell-1}\sigma_1
 =\rho^{ (k+\epsilon)(2\ell-1)}\sigma_1
 = \rho^{ (k+\epsilon)(x-1)}\sigma_1.
  \end{eqnarray*}
\end{itemize}
In conclusion, for any $x\geq 0$, we have that $\Phi(z)=\rho^{ (k+\epsilon)(x-1)}\sigma_1$.
  
 On the other hand,   the rotation $R$ is:
 $$
 R=\Phi(\beta_{{\gamma}_1}{\gamma}_1^a\beta_{{\gamma}_1}^{-1} z) 
 =S \Phi(z)
  =S\rho^{ (k+\epsilon)(x-1)}\sigma_1=\rho^{- (k+\epsilon)(x-1)}S\sigma_1
  = \rho^{- (k+\epsilon)(x-1)+t},
 $$
 where $t$ is such that  $S\sigma_1=\rho^t$.
 
Recall that   $k=\frac{n}{m}$ and  put  $d'=\frac{k}{d}$. We claim that, if $k>1$, then there is some $x$ such that 
 $$
 R=  \rho^{- (k+\epsilon)(x-1)+t}  =\rho^{d'\,({\rm mod}\, k)},
 $$

 that is,  the congruence equation
 $
 - (k+\epsilon)(x-1)+t={d'\,({\rm mod}\, k)} 
$ has a solution. 

Indeed, if $k>1$, then  $k$ and $k+\epsilon$ are coprime and the inverse of $(k+\epsilon) ({\rm mod} \, k)$ exists. Thus, the above equation has solution
\begin{equation}\label{eq:Sol-x}
  x={- (k+\epsilon)^{-1}(d'-t)+1\,({\rm mod}\, k)}.
 \end{equation} 
Now, taking $x_0$ a positive solution of  (\ref{eq:Sol-x}), and taking 
${\gamma}'_2={\gamma}'_2(x_0)$, 
we have that  the order of $R\cdot C_m$ in $C_n/C_m$ is equal to $\frac{n/m}{(n/m, d')}=\frac{k}{(k,d')}=\frac{k}{d'}=d$. Therefore, ${\gamma}'_2$ is the desired curve.  

For the remaining case, $k=1$, notice that $m=n$, so the only possibility for $d$ dividing $\frac{n}{m}$  is $d=1$. In this case,   let us see that  any admissible  curve $  {\gamma}_2 $    in ${\mathcal O}-{\gamma}_1$ satisfies $d({\gamma}_1,{\gamma}_2)=1$.  But this is immediate since ${\rm Im}\,\Phi_1$ is the whole group $G$ and so $R\in {\rm Im}\, \Phi_1$. 
\end{proof}

\end{document}